\def\VR{\kern-\arraycolsep\strut\vrule &\kern-\arraycolsep}
\def\vr{\kern-\arraycolsep & \kern-\arraycolsep}
\newtheorem{theorem}{Theorem}
\newtheorem{lemma}{Lemma}
\newtheorem{prop}{Proposition}
\theoremstyle{definition}
\newtheorem{definition}[theorem]{Definition}
\newtheorem{question}[theorem]{Question}
\newtheorem{rmk}{Remark}
\newenvironment{remark}[1][]{\begin{rmk}[#1]\pushQED{\qed}}{\popQED \end{rmk}}
\newtheorem{ex}{Example}
\newenvironment{example}[1][]{\begin{ex}[#1]\pushQED{\qed}}{\popQED \end{ex}}
\newcommand{\overlim}[1]{{\buildrel{#1}\over\longrightarrow\;}}
\newcommand{\Hom}{\operatorname{Hom}}
\newcommand{\Ext}{\operatorname{Ext}}
\newcommand{\rep}{\operatorname{rep}}
\newcommand{\Proj}{\operatorname{Proj}}
\newcommand{\rank}{\operatorname{rank}}
\newcommand{\ZZ}{\mathbb Z}
\newcommand{\C}{K}%arbitrary field
\newcommand{\PP}{\mathbb P}
\newcommand{\A}{\mathbb A}
\newcommand{\s}{\mathcal S}
\newcommand{\ima}{\operatorname{Im}}
\newcommand{\Ker}{\operatorname{Ker}}
\newcommand{\rad}{\operatorname{Rad}}
\newcommand{\soc}{\operatorname{Soc}}
\newcommand{\ee}{\operatorname{\mathbf{e}}}
\newcommand{\rr}{\mathbf{r}}
\newcommand{\ekp}{\mathsf{EKP}}
\newcommand{\eip}{\mathsf{EIP}}
\newcommand{\CR}{\mathsf{CR}}
\newcommand{\cjt}{\mathsf{CJT}}
\newcommand{\jt}{\mathsf{Jtype}}
\newcommand{\coh}{\mathsf{Coh}}
\newcommand{\Gr}{\mathsf{Gr}}
\newcommand{\Max}{\mathsf{max}}
\newcommand{\F}{\mathcal{F}}
\newcommand{\V}{\mathcal{V}}
\newcommand{\OO}{\mathcal{O}}
\newcommand{\U}{\mathcal{U}}
\newcommand{\R}{\mathcal{R}}
\newcommand{\x}{\mathbf{t}}
\newcommand{\M}{\operatorname{\mathcal{M}}}
\newcommand{\module}{\operatorname{mod}}
\newcommand{\grmodule}{\operatorname{grmod}}
\newcommand{\rk}{\operatorname{rank}}
\begin{document}
\title{Quiver representations of constant Jordan type and vector bundles}

\author{Andrew Carroll}
\address{University of Missouri-Columbia, Mathematics Department, Columbia, MO, USA}
\email[Andrew Carroll]{carrollat@missouri.edu}

\author{Calin Chindris}
\address{University of Missouri-Columbia, Mathematics Department, Columbia, MO, USA}
\email[Calin Chindris]{chindrisc@missouri.edu}

\author{Zongzhu Lin}
\address{Kansas State University, Manhattan, KS, USA}
\email[Zongzhu Lin]{zlin@math.ksu.edu}

\date{\today}
\bibliographystyle{plain}
\subjclass[2010]{16G20; 14L30}
\keywords{path algebras, modules, constant Jordan type, vector bundles, moduli spaces of thin representations, equal images/kernels properties}

\begin{abstract} Inspired by the work of Benson, Carlson, Friedlander, Pevtsova, and Suslin on modules of constant Jordan type for finite group schemes, we introduce in this paper the class of representations of constant Jordan type for an acyclic quiver $Q$. We do this by first assigning to an arbitrary finite-dimensional representation of $Q$ a sequence of coherent sheaves on moduli spaces of thin representations. Next, we show that our quiver representations of constant Jordan type are precisely those representations for which the corresponding sheaves are locally free. We also construct representations of constant Jordan type with desirable homological properties. Finally, we show that any element of $\ZZ^L$, where $L$ is the Loewy length of the path algebra of $Q$, can be realized as the Jordan type of a virtual representation of $Q$ of relative constant Jordan type.
\end{abstract}

\maketitle
\setcounter{tocdepth}{1}
\tableofcontents

\section{Introduction} Throughout this paper, $K$ is an algebraically closed field of arbitrary characteristic. By a module, we always mean a finite-dimensional left module unless otherwise specified.

A fundamental problem in the representation theory of finite-dimensional algebras is to classify the indecomposable modules. This is, however, a hopeless problem for wild algebras since their representation theory is known to be undecidable. As such, in the presence of wild algebras, one is naturally led to consider special classes of modules. Our goal here is to construct large classes of modules over path algebras of quivers that have distinguished algebraic and geometric characteristics. 

In \cite{CarFriPev1}, Carlson, Friedlander, and Pevtsova have introduced the class of modules of constant Jordan type for finite group schemes. Inspired by their seminal work (see also \cite{CarFriPev2, CarFri, FriPev, BenPev, CarFriSus}), we introduce in this paper the class of modules of constant Jordan type over path algebras of quivers.

Let $Q=(Q_0,Q_1,t,h)$ be an acyclic quiver, $KQ$ its path algebra, and $L$ the Loewy length of $KQ$. To define our $KQ$-modules of constant Jordan type, we fix an effective weight $\sigma_0$ of $Q$ and consider the corresponding moduli space $\M$ of $\sigma_0$-semi-stable thin representations of $Q$. In \cite{Hil2}, Hille showed that $\M$ is a (possibly singular) toric variety. In Section \ref{sec:modules-sheaves-main-defns}, we set up a process that assigns to any $KQ$-module $M$ a sequence $(\F_1(M), \ldots, \F_L(M))$ of coherent sheaves on $\M$. The answer to the question of when these sheaves are locally free leads us to the definition of $KQ$-modules of constant Jordan type.

On the algebraic side, we assign to each point $\alpha \in \A^{Q_1}$ a linear combination $T_{\alpha}$ of arrows of $Q$ that takes into account the toric data defining $\M$. For a given $KQ$-module $M$ and $\alpha \in \A^{Q_1}$, let $\alpha^*(M)$ be the pull-back of $M$ along the algebra homomorphism $K[t]/(t^L) \to KQ$ defined by sending $t+(t^L)$ to $T_{\alpha}$. We then define a $KQ$-module $M$ to be of \emph{constant Jordan type} if the decomposition of $\alpha^*(M)$ into indecomposable $K[t]/(t^L)$-modules does not depend on the choice of $\alpha$ in the $\sigma_0$-semi-stable locus $\V \subseteq \A^{Q_1}$ (see Section \ref{sec:main-defns} for further details). 

Our first result, Theorem \ref{cjt-bundles-thm} in Section \ref{cjt-vectbdle-sec}, simply says that a $KQ$-module $M$ is of constant Jordan type $[L]^{a_L}\dots [2]^{a_2}[1]^{a_1}$ if and only if $\F_i(M)$ is locally free of rank $a_i$ for every $1 \leq i \leq L$. This geometric correspondence is the quiver analog of a result of Benson and Pevtsova on modules of constant Jordan type for elementary abelian $p$-groups (see \cite[Proposition 2.1]{BenPev}). In fact, this geometric result of Benson and Pevtsova (see also \cite[Ch. 7]{Ben2}) has served as the guiding principle behind our definition of a module of constant Jordan type. The key difference in our approach lies in the use of moduli spaces of quiver representations instead of the Friedlander-Pevtsova's $\pi$-point schemes which are not available in the context of representations of quivers.

In Section \ref{cjt-vectbdle-sec}, we also solve the so-called geometric realization problem for tame Kronecker quivers. Specifically, we show in Theorem \ref{grp-tame-Kronecker} that any vector bundle over $\PP^1$ can be realized as $\F_1(M)$ for a module of constant Jordan type over a tame Kronecker quiver. In Section \ref{eip-ekp:sec}, we construct $KQ$-modules with the constant images/kernels properties and those with constant rank, and show they have certain homological features. We finally prove in Section \ref{exact-cjt:sec} that the category $\cjt(Q)$ of $KQ$-modules of constant Jordan type has an exact structure in the sense of Quillen. Moreover, we show that any element of $\ZZ^L$ can be realized as the Jordan type of virtual representations of relative constant Jordan type.

\subsection*{Acknowledgements} The second author would like to thank Tom Nevins for clarifying discussions on descent of coherent sheaves to geometric invariant theory quotients. The second author was supported by NSF grant DMS-1101383.

\section{Modules of constant Jordan type: main definitions and examples}\label{sec:main-defns} Let $Q=(Q_0,Q_1, t,h)$ be a connected acyclic quiver, $KQ$ its path algebra, and $L$ the Loevey length of $KQ$.  

Recall that a finite-dimensional representation $M$ of $Q$ over $K$ is a collection of finite-dimensional $K$-vector spaces $M_x$, $x \in Q_0$, and $K$-linear maps $M_a \in\Hom_K(M_{t(a)},M_{h(a)})$, $a \in Q_1$. Given two representations $M$ and $N$ of $Q$, we define a morphism $\varphi:M \rightarrow N$ to be a collection $(\varphi_x)_{x \in Q_0}$ of $K$-linear maps with $\varphi_x \in \Hom_K(M_x, N_x)$ for each $x \in Q_0$, and such that $\varphi_{h(a)}\circ M_a=N_a\circ \varphi_{t(a)}$ for each $a \in Q_1$. We denote by $\Hom_Q(M,N)$ the $K$-vector space of all morphisms from $M$ to $N$. 

The category of finite-dimensional representations of $Q$ is equivalent to the category $\module(KQ)$ of $KQ$-modules. In fact, we use interchangeably the vocabulary of $KQ$-modules and that of representations of $Q$. For each vertex $x \in Q_0$, we denote the simple (one-dimensional) $KQ$-module supported at vertex $x$ by $S(x)$ and its projective cover by $P(x)$. For background on quivers and their representations, we refer the reader to \cite{AS-SI-SK}.

In what follows, we first explain how to associate to a $KQ$-module a sequence of coherent sheaves on moduli spaces of thin representations of $Q$ by adapting the strategy from \cite[Section 2]{BenPev} to our set-up (see also \cite{FriPev} and  \cite{Ben2}). We then identify those modules for which the corresponding sequence of sheaves consists of vector bundles.

\subsection{Modules and coherent sheaves}\label{sec:modules-sheaves-main-defns} The torus $(K^*)^{Q_0}$ acts on $\A^{Q_1}$ by $$t\cdot \alpha=(t_{ha}t_{ta}^{-1}\alpha_a)_{a \in Q_1}, \forall t=(t_i)_{i \in Q_0} \in (K^*)^{Q_0}, \alpha=(\alpha_a)_{a\in Q_1} \in \A^{Q_1}.$$ 

Let $\{\ee_i \mid i \in Q_0\}$ be the standard $\ZZ$-basis of the lattice $\ZZ^{Q_0}$ and let $H=\{\sigma=(\sigma_i)_{i \in Q_0} \in \ZZ^{Q_0} \mid \sum_{i \in Q_0} \sigma_i=0\}$. Note that $H$ is a lattice of rank $|Q_0|-1$. The above torus action induces an $H$-grading on the polynomial ring
$$
\s=K[Y_a: a \in Q_1]=\bigoplus_{\sigma \in H} \s_{\sigma}.
$$
For each $\sigma \in H$, $\s_{\sigma}=\{f \in \s \mid t \cdot f=\left(\prod_{i \in Q_0} t_i^{\sigma_i}\right)f, \forall t=(t_i)_{i \in Q_0} \in (K^*)^{Q_0}\}$ is the space of semi-invariants of weight $\sigma$. Note that $Y_a \in \s_{\ee_{ta}-\ee_{ha}}$ for every arrow $a \in Q_1$. 

We fix a non-zero integral weight $\sigma_0 \in H$ for which the corresponding $\sigma_0$-semi-stable locus $(\A^{Q_1})^{ss}_{\sigma_0}$ is non-empty. (For an explicit combinatorial description of $(\A^{Q_1})^{ss}_{\sigma_0}$, see \cite{Hil2} or \cite{AltHil}.) Let $\M:=\Proj(\bigoplus_{n \geq 0} \s_{n \sigma_0})$ be the GIT-quotient of $\V:=(\A^{Q_1})^{ss}_{\sigma_0}$ by $T:=(K^*)^{Q_0}/K^*$ and let $\pi: \V \to \M$ be the good quotient morphism. Recall that $\M$ is a (possibly singular) toric variety, called a toric quiver variety in \cite{Hil2} whenever $\sigma_0$ is chosen to be generic.

Let $I$ be the finite set of lattice points of the polytope of flows with given input $\sigma_0$ (see \cite{AltHil}). Specifically, we have that 
$$I=\{\rr=(r_a)_{a \in Q_1} \in \ZZ_{\geq 0}^{Q_1} \mid \sum_{\buildrel {a \in Q_1} \over {ta=i}} r_a-\sum_{\buildrel {a \in Q_1} \over {ha=i}} r_a=\sigma_0(i), \forall i \in Q_0 \}.$$ 
If $Y_{\rr}:=\prod_{a \in Q_1} Y_a^{r_a}$, $\rr \in I$, then $\{Y_{\rr} \mid \rr \in I\}$ is a $K$-basis for $\s_{\sigma_0}$ (see \cite{CC6}). Note also that $\bigoplus_{n \geq 0} \s_{n \sigma_0}$ is generated by $\s_{\sigma_0}$ as a $K$-algebra (see for example \cite[Ex. 10.13]{MilStu}). In particular, we get that $\V=\A^{Q_1} \setminus \mathbb V(Y_{\rr} \mid \rr \in I)$. 

For each $\alpha=(\alpha_a)_{a \in Q_1} \in \A^{Q_1}$ and $\rr=(r_a)_{a \in Q_1} \in I$, we define:
\begin{itemize}
\item $\alpha_{\rr}=Y_{\rr}(\alpha)=\prod_{a \in Q_1} \alpha_a^{r_a} \in K$;
\item $X_{\rr}=\sum_{a \in Q_1}r_a X_a \in KQ$, where $X_a$ denotes the arrow $a$ in $KQ$;
\item $T_{\alpha}=\sum_{\rr \in I}\alpha_{\rr} X_{\rr} \in KQ$.
\end{itemize}

\begin{example} \label{ex:basicexamples}
\begin{enumerate}
%\item Let $S_n$ be the $n$-subspace quiver:
%$$
%\vcenter{\hbox{  
%\begin{tikzpicture}[point/.style={shape=circle, fill=black, scale=.3pt,outer sep=3pt},>=latex]
%   \node[point,label={left:$0$}] (1) at (0,0) {};
%  \node[point,label={above:$1$}] (2) at (1,1) {};
%   \node[point,label={below:$n$}] (3) at (1,-1) {};
  
%   \draw[dotted] (1,.1)--(1,-.1);
  
%  \path[->]
%   (1) edge  node[midway, above] {$a_1$} (2)
%   (1) edge  node[midway, below] {$a_n$} (3);
%   \end{tikzpicture} 
%}} 
%$$

\item Consider the generalized Kronecker quiver with $n \geq 2$ arrows: 
$$\mathcal{K}_n:~
\vcenter{\hbox{  
\begin{tikzpicture}[point/.style={shape=circle, fill=black, scale=.3pt,outer sep=3pt},>=latex]
   \node[point,label={above:$0$}] (1) at (0,0) {};
   \node[point,label={above:$1$}] (2) at (2,0) {};
  
   \draw[dotted] (1,-.4)--(1,-.2);
  
   \path[->]
   (1) edge [bend left=60] node[midway, above] {$a_1$} (2)
   (1) edge [bend left=30] node[midway, below] {$a_2$} (2)
   (1) edge [bend right=60] node[midway, below] {$a_n$} (2);
\end{tikzpicture} 
}}
$$
and let $\sigma_0(1)=1$, $\sigma_0(2)=-1$. Then, $I$ is the set of the standard basis vectors of $\ZZ^n$ and, for any $\alpha=(\alpha_1, \ldots, \alpha_n) \in \A^n$, $T_{\alpha}=\sum_{i=1}^n \alpha_i X_{a_i}$. Moreover, $\V=\A^n\setminus \{0\}$ and $\M=\PP^{n-1}$. 

More generally, toric degenerations of partial flag varieties (in type $\mathbb{A}$) can be also realized as moduli spaces $\M$ of thin representations of flag quivers (see \cite{BatCioKimvanStr} and \cite{AltvanS}).  

\item Consider the quiver:
$$
\vcenter{\hbox{  
\begin{tikzpicture}[point/.style={shape=circle, fill=black, scale=.3pt,outer sep=3pt},>=latex]
   \node[point,label={left:$0$}] (1) at (0,0) {};
   \node[point,label={above:$1$}] (2) at (2,1.5) {};
   \node[point,label={below:$2$}] (3) at (2,0) {};
   
  \path[->]
   (1) edge  node[midway, above] {$a_1$} (2)
   (1) edge [bend left=10] node[midway, above] {$a_2$} (3)
   (1) edge [bend right=10] node[midway, below] {$a_3$} (3)
   (3) edge node[midway, right] {$a_4$} (2);
\end{tikzpicture} 
}} 
$$
and let $\sigma_0(0)=2$, $\sigma_0(1)=\sigma_0(2)=-1$. Then, 
$$I=\{(1,1,0,0), (1,0,1,0), (0,2,0,1), (0,1,1,1), (0,0,2,1)\},$$
$\V=\A^4 \setminus \mathbb{V}(Y_{a_1}Y_{a_2}, Y_{a_1}Y_{a_3}, Y_{a_2}^2Y_{a_4}, Y_{a_2}Y_{a_3}Y_{a_4}, Y_{a_3}^2Y_{a_4})$, and $\M$ is the Hirzebruch surface $\mathbb F_1$ (see \cite{CraSmi}). 

More generally, Craw and Smith found in \cite{CraSmi} an effective way of realizing an arbitrary toric variety as a fine moduli space of thin representations of bound quivers. 
\end{enumerate}
\end{example}

We are now ready to describe the process that assigns to a $KQ$-module $M$ a sequence of coherent sheaves $(\F_1(M), \ldots, \F_L(M))$ on $\M$. Let $\R$ be the composition of functors:
$$
\R: \grmodule_H(\s) \overlim{\tilde{}} \coh^T(\A^{Q_1})\overlim{i^{\star}} \coh^T(\V) \overlim{\pi_{\star}^T} \coh(\M),
$$
where $\grmodule_H(\s)$ is the category of finitely-generated $H$-graded $\s$-modules, $\tilde{}$ is the standard tilde functor, $i:\V\to \A^{Q_1}$ is the open immerison, and $\pi:\V \to \M$ is the good quotient morphism. Explicitly,  if $M \in \grmodule_H(\s)$ and $f \in \s_{l\sigma_0}$ with $l \geq 1$ then
$$
\R(M)(\U_f)=\left\{\left.{m\over f^n} \right\rvert n \in \ZZ_{\geq 0}, m \in M \text{~is homogeneous with} \deg_H(m)=n(l\sigma_0)\right\},
$$
where $\U_f=\pi(\V_f) \subseteq \M$ and $\V_f\subseteq \V$ are the principal open subsets defined by $f$. Note that $\R$ is an additive exact functor. %Indeed, the tilde functor and $i^{\star}$ are clearly exact while the exactness of $\pi_{\star}^T$ follows from the fact that $\pi$ is an affine morphism and $T$ is linearly reductive in arbitrary characteristic (see \cite{Nevins}). 

For any integer $j \in \ZZ$, we define $\s(j)$ to be the $H$-graded polynomial algebra $\s$ whose $\sigma$-degree part is $\s_{\sigma+j\sigma_0}$, i.e. $\s(j)=\oplus_{\sigma \in H} \s_{\sigma+j\sigma_0}$. Then, $\R(\s)=\OO$ and, more generally, $\R(\s(j))=\mathcal O(j), \forall j \in \ZZ$, where $\OO$ is the structure sheaf on $\M$. Moreover, if $\F$ is a sheaf on $\M$, $\F(j)$ denotes the twist $\F \otimes_{\mathcal O} \mathcal O(j)$.

For a $KQ$-module $M$, we denote by $\widetilde{M}$ the trivial vector bundle $M \otimes_K \mathcal O$ over $\M$ of rank $\dim_K M$; of course, one has that $\widetilde{M}(j)=M\otimes_K \mathcal O(j)$ with underlying $H$-graded $\s$-module $M\otimes_K \s(j)$, i.e. $\R(M\otimes_K \s(j))=\widetilde{M}(j)$ for all $j \in \ZZ$.

We have the map of vector bundles $\theta_M: \widetilde{M} \to \widetilde{M}(1)$ defined by the formula 
$$\theta_M(m\otimes f)=\sum_{\rr \in I} X_{\rr}m\otimes Y_{\rr} f.$$
When the module $M$ is understood from the context, we simply write $\theta$ for $\theta_M$. We also use the same $\theta$ to denote the twist $\theta(j): \widetilde{M}(j) \to \widetilde{M}(j+1)$ for any $j \in \ZZ$. With this convention, we have $\theta^L=0$. Moreover, whenever we work with $\Ker \theta^i, i \geq 0$, it is understood that $\theta^i$ is the composition of the maps $\widetilde{M}\xrightarrow{\theta} \ldots \xrightarrow{\theta} \widetilde{M}(i)$; in particular, $\Ker \theta^i$ is a coherent subsheaf of $\widetilde{M}$. Similarly, whenever we work with $\ima \theta^i, i \geq 0$, it is understood that $\theta^i$ is the composition of the maps $\widetilde{M}(-i) \xrightarrow{\theta} \ldots \xrightarrow{\theta} \widetilde{M}$; in particular, $\ima \theta^i$ is a coherent subsheaf of $\widetilde{M}$.

For each $1 \leq i \leq L$, the fibers of the sheaf $\ima \theta^i$ can be easily described. Specifically, let $\overline{\alpha} \in \M$ be a point and choose $\alpha \in \V$ so that $\pi(\alpha)=\overline{\alpha}$. Assume $Y_{\rr}(\alpha) \neq 0$ for some $\rr \in I$ and denote by $\U_{\rr} \subseteq \M$ the image of $\{y \in \V \mid Y_{\rr}(y) \neq 0\}$ under $\pi$. This is an open affine neighborhood of $\overline{\alpha}$ in $\M$ with $\mathcal O(\U_{\rr})=(\s[{1 \over Y_{\rr}}])^T=\{{f \over Y^m_{\rr}} \mid m \in \ZZ_{\geq 0}, f\in \s_{m\sigma_0}\}$. Then, for any $j \in \ZZ$, the fiber of $\widetilde{M}(j)$ at $\overline{\alpha}$ is 
$$
\widetilde{M}(j)|_{\overline{\alpha}}=M\otimes_K \OO(j)(\U_{\rr})\otimes_{\OO(\U_{\rr})}K,
$$
where $K$ is regarded as an $\OO(\U_{\rr})$-module via evaluation at $\alpha$. We identify $\OO(j)(\U_{\rr})=\{Y_{\rr}^j\phi \mid \phi \in \OO(\U_{\rr})\}$ with $\OO(\U_{\rr})$ as $\OO(\U_{\rr})$-modules, and then identify the fiber $\widetilde{M}(j)|_{\overline{\alpha}}$ with the $K$-vector space $M$. After making these identifications, the map $\theta$ at the level of fibers,
$\theta|{\overline{\alpha}}:M \to M$, maps 
$$
m\otimes 1 \otimes 1\in M\otimes_K \OO(\U_{\rr})\otimes_{\OO(\U_{\rr})}K\simeq M$$
to
$$
\begin{aligned}
\sum_{\x \in I} X_{\x} m \otimes {Y_{\x}\over Y_{\rr}} \otimes 1&=\sum_{\x \in I} X_{\x}m\otimes 1 \otimes \alpha_{\rr}^{-1}\alpha_{\x}\\
&=\sum_{\x \in I}  \alpha_{\rr}^{-1} \alpha_{\x} X_{\x}m\otimes 1 \otimes 1\\
&=\alpha_{\rr}^{-1}T_{\alpha}m \otimes 1 \otimes 1,
\end{aligned}
$$ 
i.e., $\theta|{\overline\alpha}$ is, up to a non-zero scalar, the linear operator on $M$ that  maps $m \in M$ to $T_{\alpha}m$.

\begin{definition} Given a module $M \in \module(KQ)$, we define for each $\alpha \in \A^{Q_1}$ the linear operator $\alpha_M:M \to M$ by
$$
\begin{aligned}
\alpha_M(m)=& T_{\alpha}m=\\
=&\sum_{a \in Q_1}\left(\sum_{\rr \in I} r_aY_{\rr}(\alpha) \right) X_a m, \forall m \in M.
\end{aligned}
$$
\end{definition}

\begin{remark} Note that $\alpha_M$ is a nilpotent operator with $\alpha_M^L=0$. Hence, $\alpha_M$ is uniquely determined by its Jordan canonical form.
\end{remark}

The simple computation above proves the following key result:

\begin{prop}\label{fiber-prop} Keeping the same notation as above, the fiber of $\ima \theta^i$ at $\overline{\alpha} \in \M$ is isomorphic to $\ima \alpha_M^i$.
\end{prop}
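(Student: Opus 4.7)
The proof is essentially a repackaging of the explicit computation carried out in the paragraph immediately preceding the statement. The plan is to first observe that at the level of fibers, $\theta|_{\overline{\alpha}}$ agrees with $\alpha_M$ up to a nonzero scalar, then to iterate to control $\theta^i|_{\overline{\alpha}}$, and finally to identify the fiber of the coherent subsheaf $\ima\theta^i\subseteq\widetilde{M}$ at $\overline{\alpha}$ with the image of this fiber-level map inside $M$.

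Pick $\rr\in I$ with $Y_{\rr}(\alpha)\neq 0$ and work on the affine open neighborhood $\U_{\rr}\subseteq\M$, using the identifications $\OO(j)(\U_{\rr})=Y_{\rr}^j\OO(\U_{\rr})\simeq\OO(\U_{\rr})$ already fixed in the text. The preceding computation then gives $\theta|_{\overline{\alpha}}=\alpha_{\rr}^{-1}\alpha_M$ as linear operators on $M\simeq\widetilde{M}|_{\overline{\alpha}}$. Composing $i$ copies of this map, with each twist contributing only an additional nonzero scalar at the fiber, yields $\theta^i|_{\overline{\alpha}}=\alpha_{\rr}^{-i}\alpha_M^i$, so in particular $\ima(\theta^i|_{\overline{\alpha}})=\ima\alpha_M^i$ as subspaces of $M$.

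To finish, I would identify $(\ima\theta^i)|_{\overline{\alpha}}$ with $\ima(\theta^i|_{\overline{\alpha}})$. Over $\U_{\rr}$ both $\widetilde{M}(-i)$ and $\widetilde{M}$ trivialize to $M\otimes_K\OO(\U_{\rr})$, so $\theta^i|_{\U_{\rr}}$ is a genuine $\OO(\U_{\rr})$-linear endomorphism whose image sheaf on $\U_{\rr}$ is the $\OO(\U_{\rr})$-submodule generated by $\theta^i(M\otimes 1)$. Applying the right exact functor $-\otimes_{\OO(\U_{\rr})}K$ to this submodule sends it to the $K$-span of $\theta^i|_{\overline{\alpha}}(M)$, which by the previous paragraph equals $\ima\alpha_M^i$. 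The only subtlety is this commutation of taking image with passing to the fiber; it is handled cleanly by working on the single trivializing open $\U_{\rr}$, where $\theta^i$ literally becomes a matrix over $\OO(\U_{\rr})$. All the genuine computational content is already contained in the fiber-level formula $\theta|_{\overline{\alpha}}=\alpha_{\rr}^{-1}\alpha_M$ derived in the excerpt.
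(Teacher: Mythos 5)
Your outline follows the paper's route: the paper dispatches this proposition with the single sentence ``The simple computation above proves the following key result,'' referring to the fiber-level identity $\theta|_{\overline\alpha}=\alpha_{\rr}^{-1}\alpha_M$. Your reconstruction of that computation, and the iteration to $\theta^i|_{\overline\alpha}=\alpha_{\rr}^{-i}\alpha_M^i$, are fine.

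The issue is your final paragraph, where you explicitly invoke the commutation ``fiber of the image $=$ image of the fiber'' and claim it is ``handled cleanly'' by working over the trivializing chart $\U_{\rr}$. That is not so. Write $R=\OO(\U_{\rr})$, $\mathfrak m\subset R$ the maximal ideal of $\overline\alpha$, and $I=\ima\theta^i\subseteq M\otimes_K R$. Tensoring the surjection $M\otimes_K R\twoheadrightarrow I$ with $K=R/\mathfrak m$ shows $I\otimes_R K$ is a quotient of $M$, and tensoring the inclusion $I\hookrightarrow M\otimes_K R$ gives a map $I\otimes_R K\to M$ whose image is $\ima(\theta^i|_{\overline\alpha})$; but this second map need not be injective, because $-\otimes_R K$ is only right exact. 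Concretely, $I\otimes_R K=I/\mathfrak m I$ surjects onto $I/(I\cap\mathfrak m(M\otimes R))\cong\ima(\theta^i|_{\overline\alpha})$ with kernel $(I\cap\mathfrak m(M\otimes R))/\mathfrak m I$, which vanishes only when $\coker\theta^i$ is flat (hence free) at $\overline\alpha$. Passing to a trivializing chart does nothing to remove this: for $R=K[t]$ and $\theta=\left(\begin{smallmatrix}0&0\\ t&0\end{smallmatrix}\right)$ on $R^2$, the image submodule is $0\oplus tR\cong R$, so its fiber at $t=0$ is one-dimensional, whereas $\theta|_{t=0}=0$ has zero image. So what your argument actually establishes is a surjection $(\ima\theta^i)|_{\overline\alpha}\twoheadrightarrow\ima\alpha_M^i$, not an isomorphism. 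To close the gap you would need to argue flatness of $\coker\theta^i$ at $\overline\alpha$ (equivalently, local constancy of $\rank\alpha_M^i$ near $\overline\alpha$), which is exactly the extra hypothesis present wherever the proposition is invoked in Theorem~\ref{cjt-bundles-thm}. The paper's terse ``simple computation'' elides the same point, so your proposal is a faithful reading of the paper's intent, but the sentence dismissing the subtlety should be replaced by an honest statement of where flatness enters.
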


We obtain more precise information about $\alpha_M$ by considering the ``kernel filtration'' and the ``image filtration'' of $\widetilde{M}$:
$$
0\subset \Ker \theta \subset \ldots \subset \Ker \theta^{L-1} \subset \widetilde{M},
$$
$$
0=\ima \theta^L \subset \ima \theta^{L-1} \subset \ldots \subset \ima \theta \subset \ima \theta^0=\widetilde{M}.
$$
The quotients of the standard refinement of the kernel filtration by the image filtration are the sheaves that will be of interest to us. For simplicity, set $\mathcal K_j=\Ker \theta^j$ and $\mathcal I_j=\ima \theta^{L-j}$. We now refine each step $\mathcal K_j \subseteq \mathcal K_{j+1}$ by
$$
\mathcal K_j\subseteq (\mathcal K_{j+1} \cap \mathcal I_1)+\mathcal K_j \subseteq \ldots \subseteq (\mathcal K_{j+1} \cap \mathcal I_i)+\mathcal K_j \subseteq  (\mathcal K_{j+1} \cap \mathcal I_{i+1})+\mathcal K_j \subseteq \ldots \subseteq \mathcal K_{j+1}.$$
Recall that for three sheaves $A,B,C$ with $B \subseteq A$, one has:
$$
{A+C \over B+C} \simeq {A\over B+(A \cap C)}.
$$
This isomorphism is a consequence of the second isomorphism theorem and the modular law (see for example \cite[Section 7.4]{Ben2}). So, the factors of the refined kernel filtration are of the form:
$$
{(\mathcal K_{j+1} \cap \mathcal I_{l+1})+\mathcal K_j  \over (\mathcal K_{j+1} \cap \mathcal I_l)+\mathcal K_j} \simeq {\mathcal K_{j+1} \cap \mathcal I_{l+1}   \over (\mathcal K_{j+1} \cap \mathcal I_l)+(\mathcal K_j \cap \mathcal I_{l+1})}={\Ker \theta^{j+1} \cap \ima \theta^{i-j-1}   \over (\Ker \theta^{j+1} \cap \ima \theta^{i-j})+(\Ker \theta^j \cap \ima \theta^{i-j-1})} ,
$$
where $i=L-l+j$. Note that when $j>l$, the quotient above becomes $\mathcal I_{l+1}/(\mathcal I_l+\mathcal I_{l+1})=0$.

We are thus lead to define for $0 \leq j<i \leq L$ the functor $$\F_{i,j}:\module(KQ) \to \coh(\M)$$ by
$$
\F_{i,j}(M)={\Ker \theta^{j+1} \cap \ima \theta^{i-j-1} \over (\Ker \theta^{j+1} \cap \ima \theta^{i-j})+(\Ker \theta^j \cap \ima \theta^{i-j-1})}.$$ 

For $1 \leq i \leq L$, we define $$\F_i(M)=\mathcal F_{i,0}(M)={\Ker \theta \cap \ima \theta^{i-1} \over \Ker \theta \cap \ima \theta^i}.$$

We summarize the discussion above in the following lemma: 

\begin{lemma}\label{basic-prop-sheaves-lemma} Keeping the same notation as above, the following statements hold. 
\begin{enumerate} 
\item $\widetilde{M}$ has a filtration whose quotients are isomorphic to $\F_{i,j}(M), 0 \leq j<i \leq L$.
\item The subsheaf $\Ker \theta^i \subseteq \widetilde{M}$ has a filtration whose quotients are isomorphic to $\mathcal F_{j,l}$ with $l \leq i$.
\item The subsheaf $\ima \theta^i \subseteq \widetilde{M}$ has a filtration whose quotients are isomorphic to $\mathcal F_{j,l}$ with $j-l>i$.
\item There is a natural isomorphism $\F_{i,j}(M) \simeq \F_i(M)(j), \forall 0 \leq j<i$.
\end{enumerate}
\end{lemma}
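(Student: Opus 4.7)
My plan is to get parts (1)--(3) from the filtration construction already displayed in the lead-up to the lemma, and to obtain (4) from the fact that $\theta^j\colon \widetilde M \to \widetilde M(j)$ descends to an isomorphism between the relevant subquotients. For (1), nothing new is required: the computation immediately preceding the lemma already exhibits a filtration of $\widetilde M$ whose successive quotients are the sheaves $\F_{i,j}(M)$ for $0\le j<i\le L$, obtained by interposing the sheaves $\mathcal K_j+(\mathcal K_{j+1}\cap \mathcal I_l)$ in the kernel filtration. For (2), since $\Ker\theta^i=\mathcal K_i$ is itself one of the steps of the kernel filtration, the initial segment of this refined filtration is a filtration of $\Ker\theta^i$, and tracking indices shows that the quotients which appear are exactly the $\F_{j,l}$ with $l<i$. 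For (3), I would symmetrically refine the image filtration $0=\mathcal I_0\subset \mathcal I_1\subset\cdots\subset \mathcal I_L=\widetilde M$ by inserting the sheaves $\mathcal I_l+(\mathcal I_{l+1}\cap \mathcal K_j)$; the modular-law and second-isomorphism computation displayed before the lemma identifies the successive quotients of this second refinement with the same family $\{\F_{i,j}(M)\}$, now grouped row-by-row with $i-j=L-l$ fixed. Since $\ima\theta^i=\mathcal I_{L-i}$ is itself a term of this second refined filtration, its initial segment yields the required filtration of $\ima\theta^i$, and reading off the indices gives precisely the $\F_{j,l}$ with $j-l>i$.

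The substantive assertion is (4). The candidate isomorphism $\F_{i,j}(M)\to \F_i(M)(j)$ is the morphism induced by $\theta^j\colon \widetilde M \to \widetilde M(j)$. Well-definedness is tautological from the inclusions $\theta^j(\Ker\theta^{j+1}\cap \ima\theta^{i-j-1})\subseteq \Ker\theta\cap \ima\theta^{i-1}$, $\theta^j(\Ker\theta^{j+1}\cap \ima\theta^{i-j})\subseteq \Ker\theta\cap \ima\theta^i$, and $\theta^j(\Ker\theta^j\cap \ima\theta^{i-j-1})=0$. To verify that the induced map is an isomorphism, I would argue locally on the affine cover $\{\U_{\rr}\}_{\rr\in I}$, where each $\widetilde M(k)$ is free of rank $\dim_K M$ over $\OO(\U_{\rr})$, so that ``write $w=\theta^{i-1}u$'' and ``write $\theta^j v=\theta^i w$'' are honest statements about free-module sections. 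Surjectivity then comes from the identity $\theta^j(\theta^{i-j-1}u)=\theta^{i-1}u=w$ together with the observation that $\theta^{i-j-1}u$ lies in $\Ker\theta^{j+1}\cap \ima\theta^{i-j-1}$, because $\theta^{j+1}(\theta^{i-j-1}u)=\theta^{i}u=\theta w=0$. Injectivity comes from the decomposition $v=(v-\theta^{i-j}w)+\theta^{i-j}w$, which exhibits $v$ as a local section of the denominator $(\Ker\theta^j\cap \ima\theta^{i-j-1})+(\Ker\theta^{j+1}\cap \ima\theta^{i-j})$ of $\F_{i,j}(M)$; the two direct summands land in the correct pieces because $\theta^j(v-\theta^{i-j}w)=\theta^j v-\theta^i w=0$ and $\theta^{j+1}(\theta^{i-j}w)=\theta\cdot\theta^j v=\theta^{j+1}v=0$.

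The main obstacle is this last step: converting the ``lift'' manipulations into honest statements about coherent subsheaves of $\widetilde M(j)$ rather than mere fiber-by-fiber assertions. The remedy is precisely the passage to the affine charts $\U_{\rr}$, on which the freeness of each $\widetilde M(k)$ turns the manipulations into ordinary linear algebra in $M\otimes_K \OO(\U_{\rr})$, with $\theta$ acting by the matrix $\sum_{\rr\in I} X_{\rr}\otimes Y_{\rr}$; because the formations $\Ker(-)$, $\ima(-)$, $\cap$, $+$ all commute with localization, the local identities glue to an isomorphism of coherent sheaves on $\M$, and naturality in $M$ is inherited from the naturality of $\theta^j$.
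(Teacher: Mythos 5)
Your proposal is correct, and for parts (1)--(3) it matches the paper's one-line justification (``follow immediately from the filtrations described above''), with the added benefit of spelling out which index ranges appear; your bound $l<i$ in (2) is in fact the tight one, consistent with the paper's weaker $l\leq i$.

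For part (4) you take a genuinely different route. The paper works entirely in $\grmodule_H(\s)$: it invokes the general algebraic fact that any module endomorphism $\theta$ induces an isomorphism between the Jordan subquotients
$\bigl(\Ker\theta^{j+1}\cap\ima\theta^{i-j-1}\bigr)/\bigl(\cdots\bigr)$ and
$\bigl(\Ker\theta^{j}\cap\ima\theta^{i-j}\bigr)/\bigl(\cdots\bigr)$,
observes that since $\theta\colon\s\to\s(1)$ is graded this is an isomorphism of $H$-graded modules after shifting the target by $1$, applies the exact functor $\R$ to obtain $\F_{i,j}(M)\simeq\F_{i,j-1}(M)(1)$, and concludes by induction on $j$. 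You instead construct the isomorphism directly in $\coh(\M)$ using $\theta^j$ in a single step, and verify that it is an isomorphism by localizing to the affine charts $\U_{\rr}$ and doing the lift-and-decompose arguments there. Both work. The paper's argument is cleaner because it isolates the algebraic identity (a pure module-theory fact) from the geometric packaging (apply the exact functor $\R$), never touching the subtleties of image sheaves, local sections, and gluing. Your version has to confront precisely those subtleties, and your resolution is correct: on the affines $\U_{\rr}$ the sheaves in question are quasi-coherent, so $\Ker$, $\ima$, $\cap$ and $+$ at the sheaf level coincide with the corresponding module-level constructions, making the lift arguments honest module statements, and well-definedness of the induced map and its iso property are both local-on-the-base so they glue. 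Note that the explicit surjectivity and injectivity computations you perform are exactly the proof of the module-theoretic isomorphism the paper cites without proof; so in substance you are re-deriving that lemma, just inside an affine chart, whereas the paper factors it out once and then uses $\R$ to transport it. If you wanted to shorten your write-up to match the paper's economy, you could prove that module isomorphism once (for an arbitrary nilpotent endomorphism of a module over a commutative ring), and then observe that $\R$ carries it to the desired isomorphism of sheaves.
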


\begin{remark} Even though the arguments in the proof of \cite[Lemmas 2.2 and 2.3]{BenPev} (see also \cite[Lemma 7.4.8]{Ben2}) carry over to our quiver set-up, we include the short proof below for completeness.
\end{remark}

\begin{proof} The first three claims follow immediately from the filtrations described above. To establish the isomorphism in $(4)$, we work first in the category $\grmodule_H(S)$. In fact, any endomorphism in $\module(S)$, such as $\theta$, induces an isomorphism of $S$-modules:
$$
{\Ker \theta^{j+1} \cap \ima \theta^{i-j-1} \over (\Ker \theta^{j+1} \cap \ima \theta^{i-j})+(\Ker \theta^j \cap \ima \theta^{i-j-1})} \simeq {\Ker \theta^j \cap \ima \theta^{i-j} \over (\Ker \theta^j \cap \ima \theta^{i-j+1})+(\Ker \theta^{j-1} \cap \ima \theta^{i-j})}.
$$
Since our homomorphism $\theta: \s\to \s(1)$ is $H$-graded, the isomorphism above becomes an isomorphism of $H$-graded $S$-modules after shifting the module on the right by $1$. Applying the exact functor $\R$, we get a natural isomorphism:
$$
\F_{i,j}(M)\simeq \F_{i,j-1}(M)(1).
$$ 
It now follows by induction that $\F_{i,j}(M) \simeq \F_{i,0}(M)(j)=\F_i(M)(j)$.
\end{proof}

\begin{example} For a semi-simple module $M$ of dimension $n$, $\F_1(M)\simeq \OO^n$ and $\F_i(M)=0$ for $i \geq 2$; in particular, $\F_1(S(x)) \simeq \OO$ for every vertex $x \in Q_0$. 
\end{example}

\begin{remark}
We point out that our functors $\F_i$, $1 \leq i \leq L$, are not well-behaved with respect to syzygies. Specifically, in Example \ref{ex:basicexamples}{(1)}, $\F_1(S(1))\simeq\OO$ and $\F_1(\Omega(S(1)))=F_1(S(2)^{n})\simeq\OO^{n}$. In particular, this shows that Theorem 3.2 in \cite{BenPev}, with $p$ replaced by $L$, does not hold in our quiver set-up.
\end{remark}

\subsection{Modules of constant Jordan type and vector bundles}\label{cjt-vectbdle-sec} Motivated by the discussion above, our goal in this subsection is to understand those modules $M \in \module(KQ)$ for which the corresponding sheaves $\F_1(M), \ldots, \F_L(M)$ are vector bundles over $\M$. 

\begin{definition} A module $M \in \module(KQ)$ is said to have \emph{constant Jordan type} $[L]^{a_L}\dots [2]^{a_2}[1]^{a_1}$ if, for any $\alpha \in \V$, the Jordan canonical form of $\alpha_M$ has $a_L$ Jordan blocks of length $L$, $\ldots$, $a_1$ Jordan blocks of length $1$.
\end{definition}

If $M \in \module(KQ)$ has constant Jordan type $[L]^{a_L}\dots [2]^{a_2}[1]^{a_1}$, we define the Jordan type of $M$ to be:
$$
\jt(M)=[L]^{a_L}\dots [2]^{a_2}[1]^{a_1}.
$$

\begin{example} (1) Any semi-simple module of dimension $n$ over a path algebra is of constant Jordan type $([1]^{\dim_K M})$.\\

(2) For the generalized Kronecker quiver $\mathcal{K}_n$ and weight $\sigma_0$ from Example \ref{ex:basicexamples}{(1)}, our definition of modules of constant Jordan type coincides with Worch's definition from \cite{Wor}. 

When $n=2$, it is immediate to see that the preprojective and preinjective indecomposable modules are the only indecomposable modules of constant Jordan type.\\

(3) For the quiver $Q$ and weight $\sigma_0$ from Example  \ref{ex:basicexamples}{(2)}, it is easy to see that $P(2)$ does not have constant Jordan type. This is in contrast to the situation for elementary abelian $p$-groups where any projective module is of constant Jordan type.
\end{example}

\begin{remark} For a module $M \in \module(KQ)$ and $\alpha \in \A^{Q_1}$, let us denote by $\alpha^*(M)$ the pull-back of $M$ along the algebra homomorphism $K[t]/(t^L)\to KQ$ defined by sending $t+(t^L)$ to $T_{\alpha}$. Then, $M$ has constant Jordan type if and only if the decomposition of $\alpha^*(M)$ into indecomposable $K[t]/(t^l)$-modules does not depend on the choice of $\alpha$ in $\V$.
\end{remark}

\begin{remark}
We point out that from the perspective of our newly defined modules of constant Jordan type, the GIT-quotient $\M$ plays for us the role of the $\pi$-point scheme $\Pi(\mathcal G)$ for finite group schemes $\mathcal G$.
\end{remark}

We are now ready to prove our first result which is the quiver analog of \cite[Proposition 2.1]{BenPev}. Although the proof strategy is the same as that in ibid., we nevertheless provide the proof below for completeness. 

\begin{theorem} \label{cjt-bundles-thm}
A module $M\in \module(KQ)$ has constant Jordan type $[L]^{a_L}\ldots [1]^{a_1}$ if and only if the sheaf $\F_i(M)$ is locally free of rank $a_i$ for all $1 \leq i \leq L$.  
\end{theorem}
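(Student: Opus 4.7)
The plan is to reduce both implications to the local freeness of the image sheaves $\ima \theta^i \subseteq \widetilde{M}$, whose fiber dimensions are directly controlled by Proposition~\ref{fiber-prop}. The first (and pivotal) step I would carry out is establishing the equivalence: the sheaves $\ima \theta^0, \ldots, \ima \theta^L$ are simultaneously locally free on $\M$ if and only if $M$ has constant Jordan type. Indeed, Proposition~\ref{fiber-prop} gives $\ima \theta^i|_{\overline \alpha} \cong \ima \alpha_M^i$ as $K$-vector spaces; since $\M$ is reduced (being a toric variety), the standard criterion characterizing locally free coherent sheaves by locally constant fiber dimension translates this into the statement that $\dim_K \ima \alpha_M^i$ is independent of $\alpha \in \V$ for every $i$. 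Because the Jordan type of the nilpotent operator $\alpha_M$ is completely recovered from the sequence of dimensions $\dim_K \ima \alpha_M^i$, this is exactly the constant Jordan type hypothesis on $M$.

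With the pivotal equivalence established, my key tool is the modular law applied to the definition of $\F_i(M)$, yielding the canonical identification
\begin{equation*}
\F_i(M) \;\cong\; \ker\bigl(\theta \colon \ima \theta^{i-1}/\ima \theta^i \longrightarrow (\ima \theta^i/\ima \theta^{i+1})(1)\bigr).
\end{equation*}
For the $(\Rightarrow)$ direction, once $M$ has constant Jordan type and hence every $\ima \theta^i$ is locally free, a fiber-dimension count based on Proposition~\ref{fiber-prop} shows that the inclusion $\ima \theta^i \hookrightarrow \ima \theta^{i-1}$ is fiberwise injective; the Tor criterion then makes each quotient $\ima \theta^{i-1}/\ima \theta^i$ locally free. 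The induced map on fibers is $\alpha_M \colon \ima \alpha_M^{i-1}/\ima \alpha_M^i \to \ima \alpha_M^i/\ima \alpha_M^{i+1}$, whose rank depends only on the Jordan type of $\alpha_M$; constancy of that type makes this a map of constant rank between locally free sheaves, so its kernel $\F_i(M)$ is locally free, and a direct fiber computation identifies the rank as the number of Jordan blocks of length exactly $i$, namely $a_i$.

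For the $(\Leftarrow)$ direction, I would invoke Lemma~\ref{basic-prop-sheaves-lemma}(3): each $\ima \theta^i$ admits a finite filtration whose successive quotients are isomorphic to twists $\F_j(M)(l)$, all locally free by hypothesis (since twisting by a line bundle preserves local freeness). Because extensions of locally free sheaves by locally free sheaves are again locally free (the relevant $\mathrm{Ext}^1$ vanishes locally), an induction up the filtration forces each $\ima \theta^i$ to be locally free; the pivotal equivalence then delivers constant Jordan type, and matching ranks via the kernel identification above identifies the type as $[L]^{a_L} \cdots [1]^{a_1}$. The main obstacle I anticipate is the careful twist bookkeeping: the inclusions $\ima \theta^i \subseteq \widetilde{M}$ and the map $\theta \colon \widetilde{M} \to \widetilde{M}(1)$ each carry natural $\OO(1)$-shifts, and I must verify that the modular-law identification above is compatible with them in the sense of Lemma~\ref{basic-prop-sheaves-lemma}(4); once that bookkeeping is settled, everything else reduces to local stalkwise computations.
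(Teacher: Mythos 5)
Your argument is correct and its backbone is the same as the paper's: in both, the decisive step is Proposition~\ref{fiber-prop} combined with the ``constant fiber dimension over a reduced scheme implies locally free'' criterion to deduce that constant Jordan type is equivalent to local freeness of all $\ima\theta^i$, and in both the $(\Leftarrow)$ direction then follows from the filtration of $\ima\theta^i$ supplied by Lemma~\ref{basic-prop-sheaves-lemma}(3). Where you diverge is in the $(\Rightarrow)$ direction: the paper works directly with the defining short exact sequence $0 \to \Ker\theta\cap\ima\theta^i \to \Ker\theta\cap\ima\theta^{i-1} \to \F_i(M) \to 0$, shows $\Ker\theta\cap\ima\theta^i$ is locally free as the kernel of $\theta\colon\ima\theta^i\to\ima\theta^{i+1}(1)$ between bundles, checks left-exactness of the induced fiber sequence, and reads off the fiber dimension $a_i$; you instead pass through the identification $\F_i(M)\cong\ker\bigl(\theta\colon\ima\theta^{i-1}/\ima\theta^i\to(\ima\theta^i/\ima\theta^{i+1})(1)\bigr)$ (correct, by the second isomorphism theorem applied before taking $\R$), show the quotients $\ima\theta^{i-1}/\ima\theta^i$ are bundles, and invoke the constant-rank criterion for the kernel. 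Both routes are short and rest on the same two ingredients; yours has the merit of making the ``pivotal equivalence'' and the reducedness of $\M$ explicit (the paper uses both silently when it asserts that $\ima\theta^i$ is locally free ``by Proposition~\ref{fiber-prop}''), while the paper's stays closer to the definition of $\F_i$ and yields the rank $\sum_{j>i}a_j$ of $\Ker\theta\cap\ima\theta^i$ as a by-product. The twist bookkeeping you flagged does work out: $\theta$ carries $\ima\theta^{i-1}\subset\widetilde M$ onto $(\ima\theta^i)(1)\subset\widetilde M(1)$, so the induced map on quotients indeed lands in $(\ima\theta^i/\ima\theta^{i+1})(1)$, consistent with Lemma~\ref{basic-prop-sheaves-lemma}(4).
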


\begin{proof}[Proof of Theorem \ref{cjt-bundles-thm}] First let us assume that $\F_i(M)$ is locally free of rank $a_i$ for every $1 \leq i \leq L$. We know from Lemma \ref{basic-prop-sheaves-lemma} that, for every $1 \leq i \leq L$, $\ima \theta^i$ has a filtration whose quotients are isomorphic to $\F_j(M)(l)$ with $j-l>i$. Since these quotients are locally free sheaves, $\ima \theta^i$ is also locally free (see for example \cite[Proposition 5.2.7]{Ben2}). Next, we compute the ranks of the vector bundles $\ima \theta^i$, $1 \leq i \leq L$. Recall that $\mathcal K_i$ denotes $\Ker \theta^i$ and $\mathcal I_i$ denotes $\ima \theta^{L-i}$. Now, for each $0 \leq i \leq L-1$, the factors of the refinement of the step $\mathcal I_i \subseteq \mathcal I_{i+1}$ by the kernel filtration are of the form:
$$
{\mathcal I_{i+1} \cap \mathcal K_{l+1} \over (\mathcal I_{i+1} \cap \mathcal K_l)+(\mathcal I_i \cap \mathcal K_{l+1})}=\F_{L+l-i,l}(M)\simeq \F_{L+l-i}(M)(l), 0 \leq l \leq i, 
$$
and consequently we obtain: 
$$
\rk \ima \theta^{i-1}-\rk \ima \theta^i=\sum_{j=0}^{L-i} a_{i+j}, \forall 1 \leq i \leq L.
$$
From these relations and Proposition \ref{fiber-prop} , we get that for any $\alpha \in \V$:

 $$\rk \alpha_M^i=\rk \ima \theta^i=\sum_{j=i}^L a_j(j-i), \forall 1 \leq i \leq L,$$ and hence the number of Jordan blocks  of size $i \times i$ in the Jordan canonical form of $\alpha_M$ is:
$$
\rk \alpha_M^{i+1}+\alpha_M^{i-1}-2\rk \alpha_M^i=a_i.
$$ 
Therefore, $M$ has constant Jordan type $[L]^{a_L}\ldots [1]^{a_1}$.

Conversely, let us assume that $M$ has constant Jordan type $[L]^{a_L}\ldots [1]^{a_1}$. Let $\overline{\alpha}=\pi(\alpha)\in \M$, with $\alpha \in \V$, be an arbitrary point of $\M$ and $1 \leq i \leq L$. We will show that $\dim_K\F_i(M)_{\overline{\alpha}}\otimes_{\OO_{\overline{\alpha}}}\kappa(\overline{\alpha})=a_i$. 

By definition, we have the short exact sequence of coherent sheaves: 
$$
0 \to \Ker \theta \cap \ima \theta^i \to \Ker \theta \cap \ima \theta^{i-1} \to \F_i(M) \to 0,
$$
which gives rise to:
\begin{equation}\label{eqn-1}
(\Ker \theta \cap \ima \theta^i)_{\overline{\alpha}}\otimes_{\OO_{\overline{\alpha}}}\kappa(\overline{\alpha}) \to (\Ker \theta \cap \ima \theta^{i-1})_{\overline{\alpha}}\otimes_{\OO_{\overline{\alpha}}}\kappa(\overline{\alpha}) \to \F_i(M)_{\overline{\alpha}}\otimes_{\OO_{\overline{\alpha}}}\kappa(\overline{\alpha}) \to 0.
\end{equation}
Note that each $\Ker \theta \cap \ima \theta^i$ is a locally free sheaf of rank $\sum_{j=i+1}^L a_j$ whose fiber at $\overline{\alpha}$ is $\Ker \alpha_M \cap \ima \alpha_M^i$. Indeed, as $\Ker \theta \cap \ima \theta^i=\Ker (\theta: \ima \theta^i \to \ima \theta^{i+1})$, we have the short exact sequence of coherent sheaves:
$$
0 \to \Ker \theta \cap \ima \theta^i \to \ima \theta^i \to \ima \theta^{i+1} \to 0.
$$
Since $\ima \theta^i$ and $\ima \theta^{i+1}$ are locally free sheaves by Proposition \ref{fiber-prop}, we know that $\Ker \theta \cap \ima \theta^i$ is a locally free as well (see for example \cite[Lemma 5.2.4]{Ben2}); its rank can be easily seen to be $\sum_{j=i+1}^L a_j$. So, we have the short exact sequence of vector spaces:
$$
0 \to (\Ker \theta \cap \ima \theta^i)_{\overline{\alpha}} \otimes_{\OO_{\overline{\alpha}}}\kappa(\overline{\alpha}) \to \ima \theta^i_{\overline{\alpha}}\otimes_{\OO_{\overline{\alpha}}}\kappa(\overline{\alpha}) \to \ima \theta^{i+1}_{\overline{\alpha}}\otimes_{\OO_{\overline{\alpha}}} \kappa(\overline{\alpha}) \to 0.
$$
Using Proposition \ref{fiber-prop} again, it is now clear that the fiber of $\Ker \theta \cap \ima \theta^i$ at $\overline{\alpha}$ is $\Ker(\alpha_M: \ima \alpha_M^i \to \ima \alpha_M^{i+1})=\Ker \alpha_M \cap \ima \alpha_M^i$. This proves that the the leftmost linear map of $(1)$ is injective as $\Ker \alpha_M \cap \ima \alpha_M^i \subseteq \Ker \alpha_M \cap \ima \alpha_M^{i-1}$. Consequently, we get that $\dim_K\F_i(M)_{\overline{\alpha}}\otimes_{\OO_{\overline{\alpha}}}\kappa(\overline{\alpha})=a_i$. 
\end{proof}

At this point, it is natural to ask which vector bundles over $\M$ can be realized as $\F_i(M)$ for $M\in \module(\C Q)$ of constant Jordan type and $1 \leq i \leq L$. We refer to this problem as the \emph{geometric realization problem} for modules of constant Jordan type. In the context of finite abelian $p$-groups this problem has been answered by Benson and Pevtsova in \cite{BenPev}. Their proof is highly non-trivial and relies heavily on the specifics of the set-up; in particular, it is not clear how to adapt their proof to our non-commutative, quiver set-up. 

Next, we look into the geometric realization problem for tame Kronecker quivers. 

\begin{theorem}\label{grp-tame-Kronecker}
Let $\mathcal{K}_2$ be the Kronecker quiver 
$\xymatrix{1\ar@/^/[r]^{a_1} \ar@/_/[r]_{a_2} &2}$ and let us fix the weight $\sigma_0=(1,-1)$. Then, for any integer $n \in \ZZ_{\geq 0}$:
$$
\F_1(P(n)) \simeq \OO_{\PP^1}(n) \text{~~~and~~~} \F_1(I(n))\simeq \OO_{\PP^1}(-n),
$$
where $P(n)$ and $I(n)$ are the preprojective and preinjective representations of $\mathcal{K}_2$ of dimension $2n+1$.
Moreover, any vector bundle over $\PP^1$ can be realized as $\F_1(M)$ for a module $M$ of constant Jordan type.
\end{theorem}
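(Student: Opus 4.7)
The plan is to compute $\F_1(P(n))$ and $\F_1(I(n))$ directly from explicit presentations of these modules, and then combine them via direct sums using Grothendieck's splitting theorem on $\PP^1$. Recall that for $\mathcal{K}_2$ with $\sigma_0=(1,-1)$ we have $L=2$, $I=\{(1,0),(0,1)\}$, $\V=\A^2\setminus\{0\}$, $\M=\PP^1$, and $T_\alpha=\alpha_1 X_{a_1}+\alpha_2 X_{a_2}$. Realize $P(n)$ with $M_1=K^n$, $M_2=K^{n+1}$ and arrow maps $X_{a_1}(e_i)=f_i$, $X_{a_2}(e_i)=f_{i+1}$; realize $I(n)$ dually, with $M_1=K^{n+1}$, $M_2=K^n$ and the transposed shift maps.

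I will first check that $P(n)$ and $I(n)$ have constant Jordan type $[2]^n[1]^1$. For either module the operator $\alpha_M$ is block upper triangular, with its single nonzero block being the Toeplitz matrix $\alpha_1 A_1+\alpha_2 A_2$; this block is injective of rank $n$ for $P(n)$ and surjective of rank $n$ for $I(n)$ whenever $(\alpha_1,\alpha_2)\neq (0,0)$. In both cases $\alpha_M^2=0$ and $\rank \alpha_M=n$ for every $\alpha\in\V$, which gives $\jt(M)=[2]^n[1]^1$. By Theorem~\ref{cjt-bundles-thm}, both $\F_1(P(n))$ and $\F_1(I(n))$ are line bundles on $\PP^1$, so it remains only to compute their degrees.

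Next I will identify these line bundles using the explicit form of $\theta$. For $P(n)$, the map $\theta:\widetilde{M}(-1)\to\widetilde{M}$ is concentrated in the block
$$
\phi:M_1\otimes\OO(-1)\cong\OO(-1)^n \longrightarrow M_2\otimes\OO\cong\OO^{n+1},\qquad e_i\mapsto Y_{a_1}f_i+Y_{a_2}f_{i+1},
$$
which is fibrewise injective and hence an inclusion of vector bundles. Because $\theta^2=0$ and $\ima\theta\subseteq M_2\otimes\OO=\Ker\theta$, we have $\Ker\theta\cap\ima\theta=\ima\theta\cong\OO(-1)^n$, so $\F_1(P(n))\cong\OO^{n+1}/\OO(-1)^n$ is a line bundle of Euler characteristic $(n+1)-0=n+1$, forcing $\F_1(P(n))\cong\OO_{\PP^1}(n)$. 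The computation for $I(n)$ is dual: the corresponding block $\phi:\OO^{n+1}\twoheadrightarrow\OO(1)^n$ is now surjective, with kernel of Euler characteristic $(n+1)-2n=1-n$, so $\Ker\phi\cong\OO(-n)$. Its twist $\phi':\OO(-1)^{n+1}\twoheadrightarrow\OO^n$ is still surjective, hence $\ima\theta=M_2\otimes\OO$ and $\Ker\theta=\OO(-n)\oplus(M_2\otimes\OO)$; the quotient $\Ker\theta/(\Ker\theta\cap\ima\theta)$ then collapses to $\OO(-n)$.

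Finally, for the realization claim, Grothendieck's theorem decomposes any vector bundle on $\PP^1$ as $\bigoplus_{i=1}^r\OO(d_i)$. The functor $\F_1$ is visibly additive on direct sums, and a direct sum of modules of constant Jordan type is again of constant Jordan type (the resulting Jordan type being the sum). Setting $N(d)=P(d)$ for $d\ge 0$ and $N(d)=I(-d)$ for $d<0$, the module $M=\bigoplus_{i=1}^r N(d_i)$ is then of constant Jordan type and realizes the prescribed bundle as $\F_1(M)$. The main obstacle will be the careful bookkeeping that distinguishes $\Ker\theta$ and $\ima\theta$ as subsheaves of $\widetilde{M}$ versus $\widetilde{M}(1)$, and the identification of the resulting line bundles by their Euler characteristic in place of an explicit trivialization.
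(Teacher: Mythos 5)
Your proof is correct, and it takes a genuinely different route from the paper's. The paper works entirely in the $H$-graded module category $\grmodule_H(\s)$: it identifies $\Ker\theta_{P(n)}/\ima\theta_{P(n)}$ with $\s^{n+1}/J$ for an explicit ideal $J$, constructs a graded homomorphism onto $\bigoplus_{n'\ge 0}\s(n)_{n'}$, and painstakingly checks that this homomorphism is injective before sheafifying; the $I(n)$ case requires an additional lemma that a certain $\s^n/J$ is of finite length (so that it vanishes under $\R$). You instead establish constant Jordan type $[2]^n[1]^1$ up front, invoke Theorem~\ref{cjt-bundles-thm} to conclude that $\F_1$ is a line bundle, and then pin it down by an Euler characteristic count from the short exact sequences $0\to\ima\theta\to\Ker\theta\to\F_1\to 0$. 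This is shorter and more conceptual: fibrewise injectivity (resp. surjectivity) of the Toeplitz block gives $\ima\theta\cong\OO(-1)^n\subset\OO^{n+1}=\Ker\theta$ for $P(n)$ and $\Ker\theta\cong\OO(-n)\oplus\OO^{n}$, $\ima\theta=\OO^n$ for $I(n)$, from which the degrees drop out immediately. The trade-off is that you lean on Theorem~\ref{cjt-bundles-thm} to know $\F_1$ is locally free (so that Euler characteristic determines it) whereas the paper's argument is self-contained and in fact reproves local freeness for these modules by exhibiting the graded module directly. One small point worth spelling out: for $I(n)$, when you split $\Ker\theta$ as $\Ker\phi\oplus(M_2\otimes\OO)$, you should note that $\Ker\phi$ is locally free of rank one because $\phi$ is a fibrewise-surjective map of vector bundles on the smooth curve $\PP^1$; this is what makes the Euler characteristic argument conclude $\Ker\phi\cong\OO(-n)$ rather than merely some rank-one sheaf of that Euler characteristic. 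The final realization step via Grothendieck's splitting theorem and additivity of $\F_1$ is the same in both proofs.
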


\begin{proof} Note first that the $H$-grading on $\s$ is precisely the standard grading by total degree and $\R$ is the standard functor that assigns to a finitely generated graded module a coherent sheaf over $\PP^1$.

Recall that the preprojective module $P(n)$ is given by
\begin{align*}
    \xymatrix{\C^n \ar@/^/[r]^{\begin{bmatrix} I_n \\ 0\end{bmatrix}}
      \ar@/_/[r]_{\begin{bmatrix} 0 \\ I_n \end{bmatrix}} & \C^{n+1}.}
\end{align*}
Let $e_{1,1},\dotsc, e_{1,n}$ and $e_{2,1},\dotsc, e_{2,n+1}$ denote bases for $\C^n$ and $\C^{n+1}$ compatible with this matrix presentation of $P(n)$.  Given any $(\alpha_1,\alpha_2) \in \mathcal{V}$ we have that $T_\alpha$ is of the block form \[\left[\begin{array}{c|c} 0_{n \times n} & 0_{n\times (n+1)} \\\hline \star &
      0_{(n+1)\times (n+1)} \end{array}\right],\] where $\star$ is the $(n+1)\times n$ matrix:

\[\left[\begin{array}{ccccc}
      \alpha_1 & 0 & 0 & \dotsc & 0\\
      \alpha_2 & \alpha_1 & 0 & \dotsc & 0\\
      0 & \alpha_2 & \alpha_1 & \dotsc & 0\\
      0 & 0 & \alpha_2 & \dotsc & 0\\
      \vdots & \vdots & \vdots & \ddots & \vdots \\
      0 & 0 & 0 & \dotsc &\alpha_2
    \end{array}\right].\]
     
This matrix has rank $n$ whenever at least one of $\alpha_1,\alpha_2$ is non-zero, and its square is zero.  In particular, $P(n)$ is a module of constant Jordan type $[2]^{n}[1]^1$. We will analyze $\mathcal{F}_1(P(n))$.  Recall that this sheaf is defined to be the quotient of $\Ker \theta_{P(n)}$ by $\Ker \theta_{P(n)} \cap
\ima \theta_{P(n)}$ (in this case, $\Ker \theta_{P(n)} \cap \ima \theta_{P(n)}=\ima \theta_{P(n)}$ since $\theta_{P(n)}^2=0$).  We will work in the category $\grmodule_H(\s)$ first and then apply the functor $\R$ to yield the resulting sheaf.  To this end, we have that the map $\theta_{P(n)}: P(n) \otimes_\C \s \rightarrow P(n) \otimes_\C \s(1)$ is given by the formula \[ \theta_{P(n)}
(m\otimes f) = X_{a_1} m \otimes Y_{a_1} f + X_{a_2}m \otimes Y_{a_2}f.\]  For convenience, $x$ will denote $Y_{a_1}$ and $y$ will denote $Y_{a_2}$.  Note that
\begin{align*}
  X_{a_1} e_{1,i} &= e_{2,i}\\
  X_{a_2} e_{1,i} &= e_{2,i+1}
\end{align*}
and $X_{a_j} e_{2,i} = 0$ by definition of the module $P(n)$.

Suppose that \[ m=\sum\limits_{i=1}^n e_{1,i}\otimes f_{1,i} +
\sum\limits_{i=1}^{n+1}  e_{2,i} \otimes f_{2,i} \in P(n) \otimes_\C
\s.\]  We will identify $m$ with the element $(f_{1,1}, f_{1,2},\dotsc, f_{1,n}, f_{2,1},\dotsc,
f_{2,n+1})\in \s^{2n+1}$.  Then
\begin{align*}
  \theta_{P(n)}(m) &= e_{2,1} \otimes f_{1,1} x +
  \sum\limits_{i=2}^{n} e_{2,i} \otimes (f_{1,i}x + f_{1,i-1} y) +
  e_{2,n+1}\otimes f_{1,n} y.
\end{align*}
Therefore, $m\in \Ker \theta_{P(n)}$ if and only if $f_{1,i}=0$ for $i=1,\dotsc, n$, and $\ima \theta_{P(n)}$ is the $\s$-submodule generated by the elements $e_{2,i} \otimes x + e_{2,i+1} \otimes y$ for $i=1,\dotsc, n$.  Applying the identification above, \[ \Ker \theta_{P(n)} / \ima \theta_{P(n)} \cong \s^{n+1}/J, \] where $J$ is the submodule generated by $xe_i + ye_{i+1}$ (here, $e_i$ denotes the $i$-th standard basis vector of $\s^{n+1}$).  We claim that $\s^{n+1}/J
\cong \bigoplus_{n' \geq 0} \s(n)_{n'}$.

Consider the graded homomorphism
\begin{align*}
  \pi: \s^{n+1} & \rightarrow \s(n) \\
   (f_0,\dotsc, f_{n}) & \mapsto \sum\limits_{i=0}^{n} (-1)^i
  y^{n-i}x^i f_i.
\end{align*}
The image of $\pi$ is clearly $\bigoplus_{n' \geq 0} \s(n)_{n'}$. We will show that the kernel of this homomorphism is precisely $J$.

Let $g_j = x e_j + ye_{j+1}$ denote one of the generators of $J$. First observe that $g_j \in \Ker \pi$ since
\begin{align*}
       \pi(g_j) &= (-1)^j y^{n-j}x^j (x) + (-1)^{j+1}y^{n-j-1}x^{j+1}
  (y)\\
  &= (-1)^j(y^{n-j}x^{j+1}-y^{n-j}x^{j+1})=0.
\end{align*}
In particular, $\pi$ factors through $\s^{n+1}/J$. Denote by $\tilde{\pi}: \s^{n+1}/J \rightarrow \s(n)$ the resulting homomorphism.  We will show that $\tilde{\pi}$ is injective. Suppose that $\underline{f}=(f_0,\dotsc, f_n)+J$ is a non-zero element in $\s^{n+1}/J$. By adding elements of the form $h(x,y) g_i$ if necessary, we claim that we can find a representative of $\underline{f}$ whose first $n$, and hence all, coordinates are zero. Indeed, let $j\leq n-1$ be such that $f_j$ is the first non-zero coordinate of $(f_0, \ldots, f_n)$, and write $f_j = h(y) + x(l(x,y))$. Note that $\underline{f}=(f_0, \ldots, f_n)-l(x,y)g_j+J$ and let us denote $(f_0, \ldots, f_n)-l(x,y)g_j$ by $(f'_0, \ldots, f'_n)$. Then:
\begin{align*}
       0 = \tilde{\pi}(\underline{f}) &= \sum\limits_{i=0}^n (-1)^i
       y^{n-i} x^i f'_i \\ &= (-1)^{j_0} y^{n-j_0}x^{j_0} h(y) +
       \sum\limits_{i=j_0+1}^n (-1)^i y^{n-i}x^i f'_i\\
       &= x^{j_0}\left((-1)^{j_0}y^{n-j_0}h(y) +
         \sum\limits_{i=j_0+1}^n (-1)^i y^{n-i}x^{i-j_0}f'_i \right).
\end{align*}
Since $\s$ is an integral domain, the second factor is zero which, evaluated at $x=0$, yields $h(y)=0$. So, the first $j$ coordinates of the new representative $(f'_0, \ldots, f'_n)$ of $\underline{f}$ are now zero. Arguing this way, we can prove that $\underline{f}=0$, i.e. $\tilde{\pi}$ is injective. Therefore, \[\Ker \theta_{P(n)}/\ima \theta_{P(n)} \cong \s^{n+1}/ \Ker \pi \cong \bigoplus\limits_{n'\geq 0} \s(n)_{n'}.\]  In particular, \[ \mathcal{F}_1(P(n)) = \mathcal{R}(\Ker \theta_{P(n)}/\ima \theta_{P(n)}) \cong\OO_{\PP^1}(n) \] since $\bigoplus_{n'\geq 0} \s(n)_{n'}$ and $\s(n)$ agree in all sufficiently high degrees (see \cite{Har}).

We now turn to the preinjective module $I(n)$ given by \[\xymatrix{ \C^{n+1} \ar@/^/[r]^{\begin{bmatrix} I_n & 0 \end{bmatrix}} \ar@/_/[r]_{\begin{bmatrix} 0 & I_n \end{bmatrix}} & \C^n}. \] It is again immediate to see that $I(n)$ is of constant Jordan type $[2]^n[1]^1$. Now, let $e_{1,1},\dotsc, e_{1,n+1}$ and $e_{2,1},\dotsc, e_{2,n}$ denote a basis for $\C^{n+1}$ and $\C^n$ compatible with the matrix presentation above. Note that
\begin{align*}
       X_{a_1} e_{1,i} &= e_{2,i},\\
       X_{a_2} e_{1,i} &= e_{2,i-1},
\end{align*}
and $X_{a_j} e_{2,i} = 0$ for $i=1,\dotsc, n$ (here we take $e_{2,0}$ and $e_{2,n+1}$ be zero). Suppose that \[ m=\sum\limits_{i=1}^{n+1} e_{1,i} \otimes f_{1,i}+\sum\limits_{i=1}^n e_{2,i} \otimes f_{2,i} \in I(n) \otimes_\C \s.\]  We will identify $m$ with the element $(f_{1,1}, f_{1,2},\dotsc, f_{1,n+1}, f_{2,1},\dotsc, f_{2,n}) \in \s^{2n+1}$. Then 
\begin{align*}
\theta_{I(n)}(m)=\sum\limits_{i=1}^n e_{2,i} \otimes (xf_{1,i} + yf_{1,i+1}),
\end{align*}
and so $m\in \Ker \theta_{P(n)}$ if and only if
\begin{align*}
xf_{1,i} + yf_{1,i+1}&=0
\end{align*}
for $i=1,\dotsc, n$, and the image is the $\s$-submodule generated by the elements $e_{2,i-1}\otimes y+e_{2,i}\otimes x$ for $i=1, \ldots, n+1$. Employing the vector notation of above,
\begin{align*}
       \Ker \theta_{I(n)}/\ima \theta_{I(n)} \cong \mathcal L \oplus
       \s^n/J
\end{align*}
 where $\mathcal L\subset \s^{n+1}$ is the set of elements $(f_0,\dotsc, f_n)\in \s^{n+1}$ with $xf_i +yf_{i+1} = 0$, and $J$ is the submodule of $\s^n$ generated by $xe_1, ye_1+xe_2,\dotsc, ye_{n-1}+xe_n, ye_n$. It is now clear that the following proposition completes the proof of the first part of our theorem:
 
\begin{prop}
With all of the above notation, $\mathcal L \cong \s(-n)$ and $\s^n/J$ is of finite length.  Therefore, $\R(\mathcal L\oplus \s^n/J) \cong \OO_{\PP^{1}}(-n)$.
\end{prop}
\begin{proof}
\label{sec:with-all-above}
We first consider $\mathcal L$.  Let $p: \s(-n) \rightarrow \s^{n+1}$ denote the homomorphism of graded $\s$-modules \[ p(f)=(y^nf,-y^{n-1}xf, \dotsc, (-1)^nx^nf). \]  It is clear by definition of $\mathcal L$ that $\ima p \subset \mathcal L$. To prove the other inclusion, pick an arbitrary element $(f_0, \ldots, f_n) \in \mathcal L \subset \s^{n+1}$.  Exploiting the equations $xf_i+yf_{i+1}=0$, a quick induction argument shows that $x^iy^{n-i}$ divides $f_i$ for all $i=0, \ldots, n$. Now, writing $f_i = (-1)^iy^{n-i}x^i f_i'$ and using the defining equations $xf_i+yf_{i+1}=0$ again, we see that
\[ x((-1)^iy^{n-i}x^if_i')+y((-1)^{i+1}y^{n-i-1}x^{i+1}f'_{i+1}) = 0.\] In particular, $f_i' = f_{i+1}'$ for $i=0,\dotsc,  n-1$.  Therefore, there exists an element $f'\in \s$ for which $f_i = (-1)^i y^{n-i}x^i f'$.  Thus, $p(f') =(f_0,\dotsc, f_n)$, so $\ima p \supset \mathcal L$.  The homomorphism $p$ is clearly injective since $\s$ is an integral domain, so $p: \s(-n) \rightarrow \mathcal L$ is an isomorphism.  

As for the second statement, we simply need to show that $\s^n / J$ is of finite length.  We will do so by showing that this module is a quotient of $(\s/\langle x^n,y^n\rangle)^{n}$, which itself has finite length.  In particular, we will show that $x^ke_k$ and $y^{n-k}e_k$ are in $J$ for each $k=1,\dotsc, n$.  Write $g_i = ye_i + xe_{i+1}$ for $i=1,\dotsc, n-1$ so that $J$ is generated by $g_i$ and $xe_1, ye_n$.
         
A simple calculation shows that, for any $k=0,\dotsc, n-2$,
\begin{align*}
\sum\limits_{i=0}^{k} (-1)^iy^{k-i}x^ig_{1+i}=y^{k+1}e_1+(-1)^{k}x^{k+1}e_{k+2}.
\end{align*}
Therefore, $y^{k-1}e_1 +(-1)^k x^{k-1}e_k\in J$, and as a result,
so is $xy^{k-1}e_1 + (-1)^k x^k e_k \in J$.  But $x(y^{k-1}e_1)\in J$ as well, so $x^ke_k \in J$.  A similar argument shows that $y^{n-k}e_k \in J$.
\end{proof}
Now, recall that any vector bundle over $\PP^1$ splits into a finite direct sum of twists of $\OO_{\PP^1}$ by a classical result of Grothendieck. Furthermore, the functor $\F_1$ is easily seen to be additive for (generalized) Kronecker quivers. Finally, we conclude that for any vector bundle $\F$ over $\PP^1$, $\F\simeq \F_1(M)$ for some module $M$ of constant Jordan type. 
\end{proof}
     
\section{Special classes of modules of constant Jordan type}\label{eip-ekp:sec} In this section, we introduce the so-called modules with the (relative) constant images/kernels properties and those with constant $j$-rank. These classes of modules were first defined in the context of modular representation theory for elementary abelian $p$-groups (see \cite{CarFriSus}, \cite{FriPev}) and for generalized Beilinson bound quiver algebras (see \cite{Wor}, \cite{Far}).

Recall from previous section that:
\begin{itemize}
\item $\sigma_0 \in \ZZ^{Q_0}$ is our fixed weight such that $\V=\A^{Q_1} \setminus \mathbb{V}(Y_{\rr} \mid \rr \in I) \neq \emptyset$ where
$$I=\{\rr=(r_a)_{a \in Q_1} \in \ZZ_{\geq 0}^{Q_1} \mid \sum_{\buildrel {a \in Q_1} \over {ta=i}} r_a-\sum_{\buildrel {a \in Q_1} \over {ha=i}} r_a=\sigma_0(i), \forall i \in Q_0 \},$$
and, for each $\rr \in I$, $Y_{\rr}=\prod_{a \in Q_1} Y_a^{r_a}$;

\item  for a module $M \in \module(KQ)$ and $\alpha \in \A^{Q_1}$, we have the nilpotent linear operator $\alpha_M:M \to M$ defined by the formula:
$$
\begin{aligned}
\alpha_M(m)=& T_{\alpha}m\\
=&\sum_{a \in Q_1}\left(\sum_{\rr \in I} r_aY_{\rr}(\alpha) \right) X_a m, \forall m \in M.
\end{aligned}
$$
\end{itemize}

Throughout this section, $0 \notin V$ is a fixed non-empty open subset of $\A^{Q_1}$. 

\subsection{The constant images/kernels properties} We say that a module $M \in \module(KQ)$ is of \emph{constant Jordan type $[L]^{a_L}\dots [2]^{a_2}[1]^{a_1}$ relative to $V$} if, for any $\alpha \in V$, the Jordan canonical form of $\alpha_M$ has $a_L$ Jordan blocks of length $L$, $\ldots$, $a_1$ Jordan blocks of length $1$. For such a module $M$, we define
$$
\jt_V(M):=[L]^{a_L}\dots [2]^{a_2}[1]^{a_1}.
$$
We denote by $\cjt_V(Q)$ the full subcategory of $\module(KQ)$ whose objects are the modules of constant Jordan type relative to $V$. When $V=\V$, we simply write $\cjt(Q)$ for $\cjt_V(Q)$.

For each non-negative integer $l$, denote by $\Gamma^l_{in}$ (resp.
$\Gamma^l_{out}$) the set of vertices $y\in Q_0$ such that there is a
path $p$ of length $l$ ending at $y$ (resp. starting at $y$).  Given a module $M \in \module(KQ)$ and an integer $l \geq 1$, we define
$$
R^l(M)=\bigoplus_{y \in \Gamma^l_{in}} M_y \text{~and~} S^l(M)=\bigoplus_{x \notin \Gamma^l_{out}} M_x.
$$

\begin{remark} Note that if $M$ is a non-simple indecomposable representation of the generalized Kronecker quiver $\mathcal K_n$ then $R^1(M)=\rad(M)$ and $S^1(M)=\soc(M)$. 
\end{remark}

For each $1 \leq l \leq L$, we are interested in the following two classes of modules:
$$\eip^l_V(Q):=\{M \in \module(KQ) \mid \ima \alpha_M^l =R^l(M), \forall \alpha \in V\}$$
and
$$\ekp^l_V(Q):=\{M \in \module(KQ) \mid \Ker \alpha_M^l=S^l(M), \forall \alpha \in V\}.$$

Of course, $\eip^L_V(Q)=\ekp^L_V(Q)=\module(KQ)$. We furthermore define:
$$
\eip_V(Q):=\bigcap_{i=1}^L \eip^l_V(Q) \text{~and~} \ekp_V(Q):=\bigcap_{i=1}^L \ekp^l_V(Q).$$

We call $\eip_V(Q)$ the class of $KQ$-modules with the \emph{equal images property} relative to $V$ and $\ekp_V(Q)$ the class of $KQ$-modules with the \emph{equal kernels property} relative to $V$. Note that:
$$
\eip_V(Q)\cup \ekp_V(Q)\subseteq \cjt_V(Q).
$$ 

\begin{example} (1) For the generalized Kronecker quiver $\mathcal K_n$, it is immediate to check (see also \cite[Section 5]{Far}) that for a module $M \in \rep(\mathcal K_n)$:
\begin{itemize}
\item $M \in \eip(\mathcal K_n)$ if and only if $\sum_{i=1}^n \alpha_i M(a_i)$ is surjective for all $\alpha \in \V=\A^n\setminus \{0\}$;
\item $M \in \ekp(\mathcal K_n)$ if and only if $\sum_{i=1}^n \alpha_i M(a_i)$ is injective for all $\alpha \in \V=\A^n\setminus \{0\}$.
\end{itemize}
\vspace{5pt}
\noindent
(2) The indecomposable modules in $\eip(\mathcal K_2)$ ($\ekp(\mathcal K_2)$) are precisely the pre-injective (pre-projective) modules of $\mathcal K_2$. Indeed, simply recall that for a pre-injective or pre-projective module $M \in \rep(\mathcal K_2)$ of dimension $2n+1$, the rank of $\sum_{i=1}^n \alpha_i M(a_i)$ is $n$ for all $\alpha \in \A^n\setminus \{0\}$.

\vspace{5pt}
\noindent
(3) When $n \geq 3$, it was proved in \cite{Wor} that for any regular component $\mathcal C$, $\mathcal C \cap \eip(\mathcal K_n)$ and $\mathcal C \cap \ekp(\mathcal K_n)$ are disjoint cones, and that the width between two such cones can be arbitrarly large.
\end{example}

We have the following simple lemma: 

\begin{lemma} (a) If $M \in \eip_V(Q)$ has constant Jordan type $[L]^{a_L}\ldots [1]^{a_1}$ then $M/R^l(M)$ belongs to $\eip_V(Q)$ and
$$
\jt_V(M/R^l(M))=[l]^{a_l+\ldots+a_L} [l-1]^{a_{l-1}}\ldots [1]^{a_1}, \forall l \geq 1.$$

(b) If $M \in \ekp_V(Q)$ has constant Jordan type $[L]^{a_L}\ldots [1]^{a_1}$ then $M/S^l(M)$ belongs to $\ekp_V(Q)$ and 
$$
\jt_V(M/S^l(M))=[L-l]^{a_L}\ldots [1]^{a_{l+1}}, \forall l \geq 1.$$ 
\end{lemma}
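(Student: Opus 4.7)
The plan is first to upgrade $R^l(M)$ and $S^l(M)$ from vertexwise subspaces to genuine $KQ$-submodules, then to track the image (resp.\ kernel) filtration under the quotient by $R^l(M)$ (resp.\ $S^l(M)$), and finally to read off the Jordan type from the resulting rank data.

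The combinatorial pivot is the nesting $\Gamma^{l+1}_{in}\subseteq \Gamma^l_{in}$ and dually $\Gamma^{l+1}_{out}\subseteq \Gamma^l_{out}$: a path $a_{l+1}\cdots a_1$ of length $l+1$ ending at $y$ contains the sub-path $a_{l+1}\cdots a_2$ of length $l$ also ending at $y$, and symmetrically on the out-side. Hence for any arrow $a:x\to y$, $x\in\Gamma^l_{in}$ forces $y\in\Gamma^{l+1}_{in}\subseteq\Gamma^l_{in}$, so $R^l(M)$ is $KQ$-stable; the symmetric argument handles $S^l(M)$. Because $T_\alpha$ is a $K$-linear combination of arrows, the operator $\alpha_M$ preserves both submodules and descends to operators on the respective quotients.

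For part (a), set $N:=M/R^l(M)$, so $N_y=M_y$ for $y\notin\Gamma^l_{in}$ and $N_y=0$ otherwise. The nesting of the $\Gamma^{l'}_{in}$ immediately yields $R^{l'}(N)=R^{l'}(M)/R^l(M)$ for $l'<l$ and $R^{l'}(N)=0$ for $l'\ge l$. Meanwhile $\ima\alpha_N^{l'}=(\ima\alpha_M^{l'}+R^l(M))/R^l(M)$; substituting $\ima\alpha_M^{l'}=R^{l'}(M)$ from the hypothesis and using the containment $R^l(M)\subseteq R^{l'}(M)$ when $l'\le l$, a short case analysis on the sign of $l-l'$ gives $\ima\alpha_N^{l'}=R^{l'}(N)$ for every $\alpha\in V$, so $N\in\eip_V(Q)$. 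The Jordan type of $\alpha_N$ is then recovered from the ranks $\rk\alpha_N^{l'}=\dim R^{l'}(N)$ via $a_i=\rk\alpha^{i-1}-2\rk\alpha^i+\rk\alpha^{i+1}$, which telescopes to the asserted $[l]^{a_l+\cdots+a_L}[l-1]^{a_{l-1}}\cdots[1]^{a_1}$.

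Part (b) runs on the same template with $\Ker$ in place of $\ima$. With $N:=M/S^l(M)$ and $\Ker\alpha_M^l=S^l(M)$ from the EKP hypothesis, one has $\alpha_N^{l'}(\bar m)=0$ iff $\alpha_M^{l+l'}(m)=0$, so $\Ker\alpha_N^{l'}=S^{l+l'}(M)/S^l(M)$; a parallel vertexwise analysis using $\Gamma^{l+1}_{out}\subseteq\Gamma^l_{out}$ identifies this with $S^{l'}(N)$, placing $N$ in $\ekp_V(Q)$, and a corank calculation delivers $[L-l]^{a_L}\cdots[1]^{a_{l+1}}$. The main source of friction I anticipate is exactly this bookkeeping: matching the algebraic filtration $\{\ima\alpha_N^{l'}\}$ (resp.\ $\{\Ker\alpha_N^{l'}\}$) on the quotient with the combinatorial filtration $\{R^{l'}(N)\}$ (resp.\ $\{S^{l'}(N)\}$) requires a careful case-split on $l'$ versus $l$, and any off-by-one error there would spoil both the EIP/EKP verification and the Jordan-type formula.
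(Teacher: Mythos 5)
Your treatment of part (a) is correct and matches the paper's argument: both establish $\ima\alpha_{M'}^{j}=(R^j(M)+R^l(M))/R^l(M)$, identify this with $R^j(M')$, and then read off the Jordan type from the rank drops. The vertexwise bookkeeping works here with no shift, because $R^j(M')$ and $(R^j(M)+R^l(M))/R^l(M)$ are both supported exactly on $\Gamma^j_{in}\setminus\Gamma^l_{in}$, and the two-case split $j<l$ versus $j\ge l$ that you carry out is just what the paper does.

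Part (b), however, has a genuine gap at exactly the spot you flagged as ``friction.'' You compute (correctly, and in agreement with the paper) that $\Ker\alpha_{M''}^{j}=S^{l+j}(M)/S^l(M)$ for $M''=M/S^l(M)$, but your further claim that this coincides with $S^j(M'')$ does not hold. The subquotient $S^{l+j}(M)/S^l(M)$ is supported on $\Gamma^l_{out}\setminus\Gamma^{l+j}_{out}$, whereas $S^j(M'')$ is supported on $\Gamma^l_{out}\setminus\Gamma^{j}_{out}$; unlike in (a) the index has shifted from $j$ to $l+j$, so the two sets differ whenever some vertex lies in $\Gamma^l_{out}\cap\Gamma^j_{out}$ but not in $\Gamma^{l+j}_{out}$, and the nesting $\Gamma^{l+1}_{out}\subseteq\Gamma^l_{out}$ alone cannot repair this. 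A concrete failure: take $Q\colon 1\to 2\to 3$, $\sigma_0=(1,0,-1)$ (so $I=\{(1,1)\}$, $T_\alpha=\alpha_{a_1}\alpha_{a_2}(X_{a_1}+X_{a_2})$, $\V=\{\alpha_{a_1}\alpha_{a_2}\ne 0\}$), $M=P(1)$, and $l=1$. Then $\Ker\alpha_M^j=S^j(M)$ for $j=1,2,3$, so $M\in\ekp_\V(Q)$ of constant Jordan type $[3]$, but $M''=M/S^1(M)$ has $\Ker\alpha_{M''}=M''_2\ne 0=S^1(M'')$, so $M''\notin\ekp_\V(Q)$. The Jordan-type formula itself is unaffected, since $\rk\alpha_{M''}^j=\rk\alpha_M^{l+j}$ follows from dimension counting using only $\dim S^j(M)=\dim\Ker\alpha_M^j$ -- and this is in fact all the paper's own proof of (b) establishes; the $\ekp_V(Q)$-membership your ``parallel vertexwise analysis'' tries to supply is not argued there, and the example shows it cannot be derived from $\Ker\alpha_{M''}^{j}=S^{l+j}(M)/S^l(M)$ with $S^j$ defined vertexwise via $\Gamma^j_{out}$.
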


\begin{proof} Let $M$ be a module of constant Jordan type relative to $V$ with $\jt(M)=[L]^{a_L}\ldots [1]^{a_1}$. This is equivalent to saying that
$$
\rk(\alpha_M^{j-1})-\rk(\alpha_M^j)=a_j+\ldots +a_L, \forall \alpha \in V, \forall j \geq 1,
$$
and hence
$$
a_j=\rk(\alpha_M^{j-1})+\rk(\alpha_M^{j+1})-2\rk(\alpha_M^j), \forall \alpha \in V, \forall j \geq 1.
$$

(a) Assume that $M \in \eip_V(Q)$. Let $l \geq 1$ and denote $M/R^l(M)$ by $M'$. It is easy to see that for any $\alpha \in V$ and $j \geq 1$, $\ima \alpha^j_{M'}=(R^j(M)+R^l(M))/R^l(M)$ and so $M' \in \eip_V(Q)$. Moreover, $\rk(\alpha^j_{M'})=0$ for $j \geq l$ and $\rk(\alpha^j_{M'})=\rk(\alpha^j_M)-\rk(\alpha^l_M)$ for $j<l$. Consequently, we get  that
$$
\rk(\alpha_{M'}^{j-1})+\rk(\alpha_{M'}^{j+1})-2\rk(\alpha_{M'}^j)=
\begin{cases}
a_j&\text{~if~} j \leq l-1,\\
a_j+\ldots+ a_L&\text{~if~} j=l,\\
0&\text{otherwise},
\end{cases}
$$
and hence we get the desired formula for the Jordan type of $M'$.

(b) Assume this time that $M \in \ekp_V(Q)$. Let $l \geq 1$ and denote $M/S^l(M)$ by $M''$. It is immediate to check that for any $\alpha \in V$ and $j \geq 1$, $\Ker(\alpha^i_{M''})=S^{l+i}(M)/S^l(M)$. Using this, one immediately derives the desired formula for the Jordan type of $M''$.
\end{proof}

We show next that, after shrinking $V$ a bit, the class of modules with the constant images (kernels) property arise as the torsion-free (torsion) part of a torsion pair in $\module(KQ)$. 

Given $\alpha\in \A^{Q_1}$, for an arrow $a\in Q_1$, we write 
$$\varphi_a(\alpha)=\sum_{\mathbf{r}\in I} r_a Y_{\rr}(\alpha)$$ 
for the coefficient of $X_a$ in $T_\alpha$, and for a path $p=a_l a_{l-1} \dotsc a_1$, write 
$$\varphi_p(\alpha) = \prod_{i=1}^l \varphi_a(\alpha).$$  
(Whenever $\alpha$ is understood from the context, we simply write $\varphi_a$ and $\varphi_p$ instead of $\varphi_a(\alpha)$ and $\varphi_p(\alpha)$.)

Let $\Gamma^l(x,y)$ be the set of paths $p$ of length $l$ with $t(p)=x$ and $h(p)=y$. Consider the map
$$F_\alpha^l: \bigoplus \limits_{y\in \Gamma^l_{in}} P(y) \rightarrow \bigoplus\limits_{x\in \Gamma^l_{out}}P(x)$$ 
defined by $(F_\alpha^l)_{x,y} = \sum\limits_{p\in \Gamma^l(x,y)} \varphi_p p$.  (Here $(F_\alpha^l)_{x,y}$ is the $x$ component of the restriction of $F_\alpha^l$ to the summand $P(y)$.)  Finally, denote by $X^l_\alpha$ the cokernel of the map $F_\alpha^l$.

If we write $\widetilde{F_\alpha^l}$ for the map $\bigoplus\limits_{y\in Q_0} P(y) \rightarrow \bigoplus\limits_{x\in Q_0} P(x)$ obtained by extending $F_\alpha^l$ by zero wherever it is not defined, it is immediately clear that:
$$\Hom_{K Q}(\widetilde{F_\alpha^l}, M) = \alpha_M^l.$$ 
In particular, $\rank \alpha_M^l = \rank \Hom_{K Q}(F_\alpha^l, M)$ by definition of $\widetilde{F_\alpha^l}$.  

If $F_\alpha^l$ is injective, then the sequence 
\begin{equation}\label{eq:proj-res} 
\xymatrix@C=8ex{ 0 \ar[r] & \bigoplus\limits_{y\in \Gamma^l_{in}} P(y) \ar[r]^{F_\alpha^l} & \bigoplus\limits_{x\in \Gamma^l_{out}} P(x) \ar[r]^\pi & X^l_\alpha \ar[r] & 0} 
\end{equation} is a projective resolution of $X^l_\alpha$, which will allow for homological interpretations of $\eip_V(Q)$ and $\ekp_V(Q)$. Define:
$$
F_{inj}=\{\alpha \in \A^{Q_1} \mid F^l_{\alpha} \text{~~~is injective for~~~}l=1,\dotsc, L\}.
$$
Moreover, for an open subset $\emptyset \neq V \subseteq \A^{Q_1}$, we set 
$$V_{inj}:=V \cap F_{inj}.$$

\begin{lemma} \label{lem:ppaths} For $\alpha \in \A^{Q_1}$, $\alpha \in F_{inj}$ if and only if for each $y\in \Gamma^l_{in}$, there is at least one path $p$ of length $l$ with $h(p)=y$ and $\varphi_p(\alpha)\neq 0$. In particular, $F_{inj}$ is a non-empty open subset of $\A^{Q_1}$. 
\end{lemma}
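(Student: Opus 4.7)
My plan is to prove the biconditional characterization first and then extract the openness and non-emptiness claims from it.

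The forward direction is straightforward by contraposition. Suppose there is some $y_0 \in \Gamma^l_{in}$ for which every length-$l$ path $p$ ending at $y_0$ satisfies $\varphi_p(\alpha) = 0$. Then every matrix component $(F_\alpha^l)_{x, y_0} = \sum_{p \in \Gamma^l(x, y_0)} \varphi_p\, p$ vanishes, so $F_\alpha^l$ annihilates the generator $e_{y_0}$ of the summand $P(y_0)$, and hence fails to be injective.

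The reverse direction is the heart of the argument. The key is to exploit the path-length $\ZZ_{\geq 0}$-grading on $KQ$, which is well-defined because $Q$ is acyclic; each matrix entry $\phi_{x,y} := (F_\alpha^l)_{x,y} \in e_y (KQ) e_x$ is then homogeneous of degree $l$. Assuming for contradiction that $(z_y)_{y \in \Gamma^l_{in}} \in \Ker F_\alpha^l$ is nonzero, I will let $d$ denote the minimum path-length occurring in any $z_y$ and isolate the degree-$d$ components $z_y^{(d)}$. Extracting the degree $(d+l)$ part of each relation $\sum_y z_y \phi_{x,y} = 0$ gives $\sum_y z_y^{(d)} \phi_{x,y} = 0$ for every $x$. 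The decisive combinatorial ingredient is that in an acyclic quiver, every length-$(d+l)$ path admits a unique decomposition as a length-$d$ path followed by a length-$l$ path; this decouples the identity into individual scalar equations $a_q \varphi_p(\alpha) = 0$, where $a_q \in K$ is the coefficient of a length-$d$ path $q$ in $z_{t(q)}^{(d)}$ and $p$ runs over length-$l$ paths with $h(p) = t(q)$. The hypothesis then supplies, for each $y = t(q) \in \Gamma^l_{in}$, some length-$l$ path $p$ ending at $y$ with $\varphi_p(\alpha) \neq 0$, forcing $a_q = 0$ for every such $q$. Hence every $z_y^{(d)}$ vanishes, contradicting the choice of $d$.

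Openness of $F_{inj}$ is then immediate from the characterization: it is expressed as the intersection, over the finite index set of pairs $(l, y)$ with $1 \leq l \leq L$ and $y \in \Gamma^l_{in}$, of finite unions of principal open sets $\{\varphi_p(\alpha) \neq 0\}$. Non-emptiness will follow by a generic choice of $\alpha$: whenever an arrow $a$ appears with positive coefficient in some $\rr \in I$, the polynomial $\varphi_a = \sum_{\rr \in I} r_a Y_\rr$ is a non-zero element of $\s$, and simultaneously requiring all such $\varphi_a(\alpha) \neq 0$ cuts out a non-empty dense open set on which every path built from flow-supported arrows has $\varphi_p(\alpha) \neq 0$; combined with the existence of length-$l$ paths into each $y \in \Gamma^l_{in}$, this produces the desired $\alpha$. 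The main technical obstacle is the graded bookkeeping in the reverse direction; once set up carefully, the uniqueness of path factorization in an acyclic quiver does the rest.
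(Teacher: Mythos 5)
Your biconditional argument is correct, though the reverse direction is more roundabout than the paper's. The paper simply observes that, since a path of length $\geq l$ factors uniquely as $qp$ with $|p|=l$ (the first $l$ arrows determine $p$), the paths $qp$ appearing in $F^l_\alpha(v)=\sum\varphi_p\chi_q\,qp$ are pairwise distinct, so linear independence of paths in $KQ$ forces all $\varphi_p\chi_q$ to vanish at once; there is no need to isolate a minimal degree $d$ or pass to graded components. Your length-grading argument reaches the same conclusion with extra scaffolding, and both hinge on the same unique-factorization fact. Your openness argument (finite intersection of finite unions of the principal opens $\{\varphi_p\neq 0\}$) is correct and is not spelled out in the paper.

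The non-emptiness argument has a genuine gap. You choose $\alpha$ so that $\varphi_a(\alpha)\neq 0$ for every arrow $a$ carrying positive flow in some $\rr\in I$, and then appeal to ``the existence of length-$l$ paths into each $y\in\Gamma^l_{in}$.'' But that gives only \emph{some} length-$l$ path ending at $y$, not one built entirely from flow-supported arrows. If every length-$l$ path into some $y\in\Gamma^l_{in}$ uses a dead arrow $a$ (one with $r_a=0$ for all $\rr\in I$, hence $\varphi_a\equiv 0$), then $\varphi_p\equiv 0$ for all such $p$ and $F^l_\alpha$ is injective for no $\alpha$ at all. This can actually occur: take $Q$ with vertices $1,2,3$ and arrows $a:1\to 2$, $b:2\to 3$, $c:1\to 3$, with $\sigma_0=(0,1,-1)$. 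Then $I=\{(0,1,0)\}$, so $a$ and $c$ are dead while $\V=\{\alpha:\alpha_b\neq 0\}\neq\emptyset$; yet $2\in\Gamma^1_{in}$ and the only length-$1$ path ending at $2$ is $a$, so $F^1_\alpha$ is never injective and $F_{inj}=\emptyset$. So the ``in particular'' clause requires an additional hypothesis on $\sigma_0$ (e.g.\ that for every $l$ and $y\in\Gamma^l_{in}$ there is a length-$l$ path into $y$ built from flow-supported arrows), and your argument establishes non-emptiness only under that stronger assumption, which you tacitly invoke without justification.
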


\begin{proof} \label{sec:alpha-in-v}
Recall that the map $F_\alpha^l$ sends $v$ to $\sum\limits_{\lvert p \rvert = l} \varphi_p vp$.  Now an element $v$ in $\bigoplus_{y\in \Gamma^l_{in}} P(y)$ is of the form $v=\sum_{y\in \Gamma^l_{in}} \sum_{q: t(q)=y} \chi_q q$, so 
$$F_\alpha^l(v)=\sum\limits_{y\in \Gamma^l_{in}}~~~\sum\limits_{q:t(q)=y}~~~ \sum\limits_{\buildrel {p: h(p)=y} \over {|p|=l}} \varphi_p \chi_q qp.$$  
Since paths are linearly independent, this element is equal to zero if and only if $\varphi_p\chi_q=0$ for all concatenable paths $qp$ with $h(p)=t(q)$ and $|p|=l$. In other words, $F^l_{\alpha}$ is injective if and only if for every vertex $y\in \Gamma^l_{in}$ there exists a path $p$ with $h(p)=y$, $|p|=l$ and $\varphi_p \neq 0$. 
\end{proof}

\begin{example} (1) For the generalized Kronecker quiver $\mathcal K_n$, we clearly have that $\V_{inj}=\V$.

(2) On the other hand, in Example \ref{ex:basicexamples}{(2)}, the coefficients $\varphi_a(\alpha)$ are given below:
\begin{align*}
\varphi_{a_1}(\alpha) &= \alpha_1 \alpha_2+ \alpha_1 \alpha_3\\
\varphi_{a_2}(\alpha) &= \alpha_1 \alpha_2 + 2\alpha_2^2 \alpha_4 + \alpha_2\alpha_3\alpha_4\\
\varphi_{a_3}(\alpha) &= \alpha_1 \alpha_3 + \alpha_2\alpha_3\alpha_4 + 2\alpha_3^2 \alpha_4\\
\varphi_{a_4}(\alpha) &=\alpha_2^2 \alpha_4 + \alpha_2\alpha_3\alpha_4 + \alpha_3^2\alpha_4,
\end{align*}
and so $F_{inj}=\{\alpha \in \A^{Q_1} \mid \varphi_{a_4}(\alpha)\neq 0 \text{~and either~} \varphi_{a_2}(\alpha) \neq 0 \text{~or~} \varphi_{a_3}(\alpha)\neq 0\}$. Hence, $\V_{inj}$ is properly contained in $\V$ since $\alpha=(1,2,0, 0) \in \V\setminus \V_{inj}$.
\end{example}

\begin{question} Is there a weight $\sigma_0$ such that $\V_{inj}=\V$?
\end{question}

We have the following useful description of $\eip$ and $\ekp$ after shrinking $V$ to $V_{inj}$:

\begin{prop}
\begin{enumerate}
\renewcommand{\theenumi}{\arabic{enumi}}
\item $\eip_{V_{inj}}(Q)=\{M \in \module(KQ) \mid \Ext_{KQ}^1(X^l_{\alpha}, M)=0, \forall 1 \leq l \leq L, \forall \alpha \in V_{inj}\}$. Consequently, $\eip_{V_{inj}}$ contains all injective $KQ$-modules and is the torsion class $\mathcal T$ of a torsion pair $(\mathcal T, \mathcal F)$ in $\module(KQ)$.
\item $\ekp_{V_{inj}}(Q)=\{M \in \module(KQ) \mid \Hom_{KQ}(X^l_{\alpha}, M)=0, \forall 1 \leq l \leq L, \forall \alpha \in V_{inj}\}$. Consequently, $\ekp_{V_{inj}}(Q)$ is the torsion-free class $\mathcal F'$ of a torsion pair $(\mathcal T', \mathcal F')$ in $\module(KQ)$. 
\end{enumerate}
\end{prop}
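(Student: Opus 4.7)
The plan is to apply $\Hom_{KQ}(-, M)$ to the projective resolution \eqref{eq:proj-res}, which is available precisely when $\alpha \in V_{inj}$, and read off both characterizations from the resulting four-term exact sequence. Since $\Hom_{KQ}(P(z), M) \cong M_z$ for each vertex $z$, this sequence takes the shape
$$0 \to \Hom_{KQ}(X^l_\alpha, M) \to \bigoplus_{x \in \Gamma^l_{out}} M_x \xrightarrow{\Phi} R^l(M) \to \Ext^1_{KQ}(X^l_\alpha, M) \to 0,$$
where $\Phi := \Hom_{KQ}(F^l_\alpha, M)$.

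The next step is to identify $\Phi$ with the restriction of $\alpha_M^l$. This uses the identity $\Hom_{KQ}(\widetilde{F^l_\alpha}, M) = \alpha_M^l$ noted just before Lemma \ref{lem:ppaths}, together with the vector space decomposition $M = S^l(M) \oplus \bigoplus_{x \in \Gamma^l_{out}} M_x$. Since $\alpha_M^l$ annihilates $S^l(M)$ (no path of length $l$ begins at a vertex outside $\Gamma^l_{out}$), the image of $\Phi$ coincides with $\ima \alpha_M^l$, while $\Ker \alpha_M^l = S^l(M) \oplus \Ker \Phi$. Reading off the exact sequence therefore yields canonical isomorphisms
$$\Hom_{KQ}(X^l_\alpha, M) \cong \Ker \alpha_M^l / S^l(M), \qquad \Ext^1_{KQ}(X^l_\alpha, M) \cong R^l(M) / \ima \alpha_M^l,$$
from which the two set-theoretic equalities in (1) and (2) follow at once, since $S^l(M) \subseteq \Ker \alpha_M^l$ and $\ima \alpha_M^l \subseteq R^l(M)$ always.

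For the torsion-pair statements, I would verify the standard closure properties. The class $\ekp_{V_{inj}}(Q)$ is of the form $\{M : \Hom_{KQ}(X, M) = 0 \text{ for all } X \in \mathcal{X}\}$ for $\mathcal{X} = \{X^l_\alpha \mid 1 \leq l \leq L,\, \alpha \in V_{inj}\}$, so it is automatically closed under submodules and extensions, hence is the torsion-free class of a torsion pair $(\mathcal T', \mathcal F')$. For $\eip_{V_{inj}}(Q)$, the crucial observation is that \eqref{eq:proj-res} exhibits each $X^l_\alpha$ as having projective dimension at most one, so $\Ext^2_{KQ}(X^l_\alpha, -) = 0$; feeding this into the long exact $\Ext$-sequence forces $\{M : \Ext^1_{KQ}(X, M) = 0 \text{ for all } X \in \mathcal{X}\}$ to be closed under quotients. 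Closure under extensions is automatic, and every injective $KQ$-module lies in the class since $\Ext^1_{KQ}(-, I) = 0$ for any injective $I$, so this class is indeed a torsion class.

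The main obstacle I expect is the bookkeeping in the second step: keeping the direct-sum decomposition $M = S^l(M) \oplus \bigoplus_{x \in \Gamma^l_{out}} M_x$ and the fact that $\alpha_M^l$ annihilates $S^l(M)$ straight throughout, so that the passage between the full operator $\alpha_M^l$ on $M$ and the restricted map $\Phi$ correctly identifies images with $\ima \alpha_M^l$ and kernels modulo $S^l(M)$. Once that identification is clean, the homological characterizations are immediate and the torsion-pair conclusions are formal.
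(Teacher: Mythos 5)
Your argument is correct and follows essentially the same route as the paper's proof: apply $\Hom_{KQ}(-,M)$ to the projective resolution \eqref{eq:proj-res}, identify the middle map with the appropriate restriction/projection of $\alpha_M^l$ (the paper writes this as $\pi\circ\alpha_M^l\circ\tau$, you write it as $\Phi=\Hom_{KQ}(F^l_\alpha,M)$; these coincide), read off the $\Hom$/$\Ext^1$ characterizations from the four-term exact sequence, and then get the torsion-pair closure properties from the long exact $\Ext$ sequence. Your explicit remark that $\mathrm{pd}\,X^l_\alpha\le 1$ (hence $\Ext^2=0$) is what makes $\eip_{V_{inj}}(Q)$ closed under quotients; the paper relies on the same fact (which also holds automatically since $KQ$ is hereditary), so there is no substantive difference.
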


\begin{proof} For every $M \in \module(KQ)$, $\alpha \in F_{inj}$, and $l \geq 1$, we have the long exact sequence:
$$
0 \to \Hom_Q(X^l_{\alpha},M)\to \bigoplus_{x \in \Gamma^l_{out}} M_x \to \bigoplus_{y \in \Gamma^l_{in}} M_y \to \Ext^1_Q(X^l_{\alpha}, M) \to 0,
$$
where the map in the middle is $\pi \circ \alpha_M^l\circ \tau$  with $\tau$ the canonical embedding of $\bigoplus_{x \in \Gamma^l_{out}} M_x$ into $\bigoplus_{x \in Q_0} M_x$ and $\pi$ the canonical projection of $\bigoplus_{x \in Q_0} M_x$ onto $\bigoplus_{y \in \Gamma^l_{in}} M_y$. Note also that
$$
\ima \alpha^l_M=\ima (\pi \circ \alpha_M^l\circ \tau)  \text{~~and~~} \Ker \alpha^l_M=\Ker (\pi \circ \alpha_M^l\circ \tau) \oplus \bigoplus_{x \notin \Gamma^l_{out}}M_x.
$$
Consequently, we get that for every $l \geq 1$,
$$\eip^l_{V_{inj}}(Q)=\{M \in \module(KQ) \mid \Ext_Q^1(X_{\alpha}^l,M)=0, \forall \alpha \in V_{inj}\},$$
and
$$\ekp^l_{V_{inj}}(Q)=\{M \in \module(KQ) \mid \Hom_Q(X_{\alpha}^l,M)=0, \forall \alpha \in V_{inj}\}.$$
In particular, this gives us the description of $\eip_{V_{inj}}(Q)$ and $\ekp_{V_{inj}}(Q)$ in terms of the vanishing of $\Ext$ and $\Hom$; it also shows that all injective $KQ$-modules belong to $\eip_{V_{inj}}(Q)$.

Next, let $0\rightarrow M' \rightarrow M \rightarrow M''\rightarrow 0$ be a short exact sequence in $\module(KQ)$ (in particular $M'$ is a submodule of $M$ and $M''$ is a quotient).  By applying $\Hom_{K Q}(X_\alpha^l, -)$ it is easily seen that if $M$ is in $\eip_{V_{inj}}(Q)$, then so is $M''$, while if $M$ is in $\ekp_{V_{inj}}(Q)$, so is $M'$.  It also shows that if $M', M''$ are in either class, then so is $M$. Therefore, $\eip_{V_{inj}}(Q)$ is closed under quotients, $\ekp_{V_{inj}}(Q)$ is closed under submodules, while they are both closed under extension. 

So, $\eip_{V_{inj}}(Q)$ is indeed the torsion class of a torsion pair $(\mathcal{T}, \mathcal{F})$, and $\ekp_{V_{inj}}(Q)$ is the torsion-free class of a torsion pair $(\mathcal{T}', \mathcal{F}')$.
\end{proof}

\begin{remark} It is easy to see that for any $l \geq 1$:
$$\eip^l_V(Q)\subseteq \{M \in \module(KQ) \mid \Ext_Q^1(X_{\alpha}^l,M)=0, \forall \alpha \in V\},$$
and
$$\{M \in \module(KQ) \mid \Hom_Q(X_{\alpha}^l,M)=0, \forall \alpha \in V\} \subseteq \ekp^l_V(Q).$$
\end{remark}

\begin{remark} Before we move on, we would like to point out some key differences between our non-commutative quiver set-up and Worch's set-up. In \cite{Wor}, the definition of modules of constant Jordan type and of those with the constant images/kernels properties involves \emph{commutative} elements of generalized Beilinson algebras. In particular, this commutativity allows one to immediately conclude that the linear operators $\alpha_{X^l_{\alpha}}$ are zero in the set-up of \cite{Wor}. This, in turn, is key when it comes to checking that the analogs of $\eip$ and $\ekp$ for generalized Beilinson algebras are closed under $\tau$ and $\tau^{-}$, respectively.

In our set-up, $\alpha_{X^l_{\alpha}}$ is \emph{not} necessarily zero for generic $\alpha \in \A^{Q_1}$ and this leads to obstacles in the way of checking whether $\eip_{V_{inj}}(Q)$ or $\ekp_{V_{inj}}(Q)$ is closed under Auslander-Reiten translation for arbitrary acyclic quivers $Q$. Nonetheless, we plan to address these issues in a forthcoming paper on the subject.
\end{remark}

\subsection{Modules of relative constant rank} Modules of constant rank for group schemes have been introduced by Friedlander and Pevtsova in \cite{FriPev}. We extend their definition to our quiver set-up as follows. 

\begin{definition} A module $M \in \module(KQ)$ is said to be of \emph{constant $l$-rank relative to $V$}, where $l\geq 1$, if there exists an integer $r_l$ such that $\rk(\alpha_M^l)=r_l$ for all $\alpha \in V$. 

We denote the class of all such modules by $\CR^l_V(Q)$. When $V=\V$, we simply write $\CR^l(Q)$ instead of $\CR^l_{\V}(Q)$.
\end{definition}

Obviously, one has that: 
$$\cjt_V(Q)=\bigcap_{l=1}^L \CR_V^l(Q).$$ 
Note that Proposition \ref{fiber-prop} can be rephrased as saying that:

\begin{prop} Assume that $V=\pi^{-1}(Y)$ where $Y$ is an open subvariety of $\M$ and $\pi:\V \to \M$ is the quotient morphism. Then, for a module $M \in \module(KQ)$, the restriction of $\ima \theta_M^l$ to $Y$ is a locally free sheaf on $Y$ if and only if $M \in \CR^l_V(Q)$.
\end{prop}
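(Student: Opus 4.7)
The plan is to deduce the equivalence directly from Proposition \ref{fiber-prop} together with the standard criterion that a coherent sheaf on a reduced Noetherian scheme is locally free if and only if its fiber dimension is locally constant. Since $\M$ (and hence its open subvariety $Y$) is a toric variety, it is reduced and Noetherian, so this criterion is available to us.

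First I would unpack what the fibers look like. By Proposition \ref{fiber-prop}, for any point $\overline{\alpha} \in Y$ and any lift $\alpha \in V = \pi^{-1}(Y)$, there is a canonical isomorphism
$$
\bigl(\ima \theta_M^l\bigr)\bigr|_{\overline{\alpha}} \otimes_{\OO_{\overline{\alpha}}} \kappa(\overline{\alpha}) \;\simeq\; \ima \alpha_M^l.
$$
In particular, the fiber dimension of the coherent sheaf $\ima \theta_M^l|_Y$ at $\overline{\alpha}$ equals $\rk(\alpha_M^l)$ for any $\alpha \in \pi^{-1}(\overline{\alpha})$; this dimension is therefore constant along $\pi$-fibers, so the function $\overline{\alpha} \mapsto \dim_{\kappa(\overline{\alpha})}\bigl(\ima \theta_M^l\bigr)\bigr|_{\overline{\alpha}} \otimes \kappa(\overline{\alpha})$ on $Y$ is fully encoded by the rank function $\alpha \mapsto \rk(\alpha_M^l)$ on $V$.

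From here the two implications are short. For the forward direction, if $\ima \theta_M^l|_Y$ is locally free, its fiber dimension is locally constant on $Y$; pulling back along the surjection $\pi: V \twoheadrightarrow Y$, the rank $\rk(\alpha_M^l)$ is locally constant on $V$. Since $V$ is connected (indeed irreducible, as an open subset of the affine space $\A^{Q_1}$), this rank must actually be constant, and hence $M \in \CR^l_V(Q)$. For the converse, if $M \in \CR^l_V(Q)$ with constant value $r_l$, then by the fiber identification above, $\ima \theta_M^l|_Y$ has constant fiber dimension $r_l$ on the reduced Noetherian scheme $Y$; the standard criterion then forces local freeness.

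The only mildly subtle point is invoking the ``constant fiber dimension $\Rightarrow$ locally free'' criterion on the possibly singular toric variety $Y$. This is not an issue, however: the fiber-dimension function of any coherent sheaf on a Noetherian scheme is upper semicontinuous, and on a reduced scheme constancy of this function upgrades to local freeness (see, e.g., the standard reference in \cite{Ben2} or equivalently \cite{Har}). No commutative-algebra delicacy beyond this needs to be addressed, so I expect no genuine obstacle in the argument.
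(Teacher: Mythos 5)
Your proof is correct and follows exactly the route the paper intends: the paper presents this proposition as a direct rephrasing of Proposition~\ref{fiber-prop}, leaving implicit the standard identification (on a reduced Noetherian scheme such as $Y$) between local freeness of a coherent sheaf and constancy of its fiber-dimension function. You have simply spelled out that implicit step, together with the observation that $V$ (or equivalently $Y$, being a nonempty open subvariety of the irreducible toric variety $\M$) is connected, so ``locally constant rank'' upgrades to ``constant rank.''
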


By analogy with the work of Worch \cite{Wor}, it is possible to give a homological interpretation of the set of modules $M$ of relative constant $l$-rank. 

\begin{prop}\label{prop:rel-const-rank}
A module $M\in \module(KQ)$ is of constant $l$-rank relative to $V_{inj}$ if and only if there exists an integer $c_l$ such that $\dim_K\Ext^1_{K Q}(X^l_\alpha, M)=c_l$ for all $\alpha \in V_{inj}$. In other words,
$$
\CR^l_{V_{inj}}(Q)=\{M \in \module(KQ) \mid \exists c_l \text{~such that~}\dim_K \Ext^1_{KQ}(X^l_{\alpha}, M)=c_l, \forall \alpha \in V_{inj}\}.
$$
\end{prop}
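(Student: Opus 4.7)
The plan is to extract everything from the four-term exact sequence already set up in the proof of the previous proposition, and observe that for $\alpha \in V_{inj}$ the rank of $\alpha_M^l$ and the dimension of $\Ext^1_{KQ}(X^l_\alpha, M)$ differ by a constant that depends only on $M$ (not on $\alpha$).

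First I would recall that $V_{inj} \subseteq F_{inj}$, so that for every $\alpha \in V_{inj}$ the sequence \eqref{eq:proj-res} is a genuine projective resolution of $X^l_\alpha$. Applying $\Hom_{KQ}(-, M)$ and using $\Hom_{KQ}(P(x), M) \cong M_x$ then yields, exactly as before, the four-term exact sequence
$$
0 \to \Hom_{KQ}(X^l_\alpha, M) \to \bigoplus_{x \in \Gamma^l_{out}} M_x \xrightarrow{\;\pi \circ \alpha_M^l \circ \tau\;} \bigoplus_{y \in \Gamma^l_{in}} M_y \to \Ext^1_{KQ}(X^l_\alpha, M) \to 0,
$$
where $\tau$ is the canonical inclusion and $\pi$ the canonical projection.

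Next I would note the key point that $\rk \alpha_M^l = \rk(\pi \circ \alpha_M^l \circ \tau)$: indeed, because every path of length $l$ starts at a vertex in $\Gamma^l_{out}$ and ends at a vertex in $\Gamma^l_{in}$, the operator $\alpha_M^l$ vanishes on $\bigoplus_{x\notin \Gamma^l_{out}} M_x$ and takes values in $\bigoplus_{y\in \Gamma^l_{in}} M_y = R^l(M)$. Counting dimensions in the exact sequence therefore gives
$$
\dim_K \Ext^1_{KQ}(X^l_\alpha, M) \;=\; \dim_K R^l(M) \;-\; \rk \alpha_M^l.
$$
Since $\dim_K R^l(M)$ is an invariant of $M$ alone, the function $\alpha \mapsto \rk \alpha_M^l$ is constant on $V_{inj}$ (equal to some $r_l$) if and only if $\alpha \mapsto \dim_K \Ext^1_{KQ}(X^l_\alpha, M)$ is constant on $V_{inj}$ (equal to $c_l := \dim_K R^l(M) - r_l$). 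This gives both directions of the proposition at once.

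There is really no main obstacle here beyond the observation about $\rk \alpha_M^l$ and the factorization through $R^l(M)$; once that is in hand the proof is a dimension count in the already established long exact sequence. The only subtlety worth flagging explicitly is that one must restrict to $V_{inj}$ rather than $V$ in order for $F^l_\alpha$ to be injective and hence for \eqref{eq:proj-res} to be an honest projective resolution — outside $V_{inj}$ the higher $\Ext$ groups of $X^l_\alpha$ could contribute and the four-term sequence above need not be exact.
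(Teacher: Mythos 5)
Your proof is correct and follows essentially the same route as the paper: apply $\Hom_{KQ}(-,M)$ to the projective resolution~\eqref{eq:proj-res}, identify $\rank(\pi\circ\alpha_M^l\circ\tau)$ with $\rank\alpha_M^l$, and read off $\dim_K \Ext^1_{KQ}(X^l_\alpha,M) = \dim_K R^l(M) - \rank\alpha_M^l$ from the four-term exact sequence. Your explicit remarks on why one must restrict to $V_{inj}$ and why the middle map computes the full rank of $\alpha_M^l$ make the argument, if anything, slightly more self-contained than the paper's.
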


\begin{proof}
For $\alpha\in V_{inj}$, we have have the short exact sequence $$\xymatrix@C=2ex{0 \ar[r] & \Hom_{K Q}(X^l_\alpha, M) \ar[r] & \bigoplus\limits_{x\in \Gamma^l_{out}} \Hom_{K Q}(P(x), M) \ar[r]^\delta & \bigoplus\limits_{y\in \Gamma^l_{in}} \Hom_{KQ}(P(y), M) \ar[r] & \Ext^1_{KQ}(X_\alpha^l, M)\ar[r] & 0}.$$
We have seen that $\rank \delta = \rank \alpha_M^l$ and so, by exactness,
\begin{align*} \dim_K \Ext^1_{KQ}(X_\alpha^l, M) &= \dim_K \left(\bigoplus\limits_{y\in \Gamma^l_{in}} \Hom_{KQ}(P(y), M)\right) - \rank \delta\\ &=\left(\sum\limits_{y\in \Gamma^l_{in}} \dim_K   M_y\right) - \rank \alpha_M^l.\end{align*}  Therefore, $\rank \alpha_M^l$ is constant for all $\alpha\in V_{inj}$ if and only if $\dim \Ext^1_{KQ}(X_\alpha^l, M)$ is constant for all $\alpha \in V_{inj}$.  
\end{proof}

We end this section with a result on modules of constant rank and maps to Grassmannians. This is the quiver analog of Farnsteiner's Theorems 5.4.1 and 5.4.3 from \cite{Far} (see also \cite{Far2}):

\begin{prop} Assume that $V=\pi^{-1}(Y)$ where $Y$ is an open subvariety of $\M$ and $\pi:\V \to \M$ is the quotient morphism. If $Y=\mathbb P^n$ and $M \in \CR^l_V(Q)$ with $\dim_K R^l(M) \leq n$ then $\ima \alpha_M^l$ does not change for $\alpha \in V$. 

In particular, if $M \in \cjt(\mathcal K_{n+1})$, $\dim_K M_1\leq n$, and $S(1)$ is not a direct summand of $M$ then $M \in \eip(\mathcal K_{n+1})$.
\end{prop}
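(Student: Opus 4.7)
The strategy is to recast the question as a purely geometric statement about subbundles of a trivial bundle over $\mathbb{P}^n$ and prove it by induction on rank. Under the constant $l$-rank hypothesis, the preceding proposition gives that $\mathcal{E} := (\ima \theta_M^l)|_Y$ is locally free on $Y=\mathbb{P}^n$ of some constant rank $r$; because every $\alpha_M^l$ takes values in $R^l(M)$, the sheaf $\mathcal{E}$ actually sits as a subbundle of the trivial bundle $R^l(M) \otimes_K \OO_{\mathbb{P}^n}$, whose rank equals $N := \dim_K R^l(M) \leq n$. By Proposition~\ref{fiber-prop} the fiber of $\mathcal{E}$ at $\overline\alpha$ is $\ima \alpha_M^l$, so it will suffice to show that $\mathcal{E}$ is a \emph{constant} subbundle, i.e.\ $\mathcal{E} = V\otimes_K \OO$ for a fixed subspace $V \subseteq R^l(M)$. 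The whole proof therefore reduces to the following Key Lemma: every vector subbundle $\mathcal{E} \subseteq \OO_{\mathbb{P}^n}^N$ with $N \leq n$ is a constant subbundle.

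For the base case $\rk \mathcal{E} = 1$, a line subbundle $\OO(a) \hookrightarrow \OO^N$ is prescribed by an $N$-tuple of sections of $\OO(-a)$ with no common zero; because any $\leq n$ hypersurfaces of positive degree in $\mathbb{P}^n$ share a common point, $N \leq n$ forces $a = 0$, and then the embedding is specified by a nonzero constant vector in $K^N$, a constant line subbundle. For the inductive step with $s := \rk\mathcal{E} > 1$, consider $V^{\max} := H^0(\mathcal{E}) \otimes_K \OO \subseteq \mathcal{E}$: every section of $\mathcal{E} \subseteq \OO^N$ is a constant vector in $K^N$ that lies in each fiber of $\mathcal{E}$, so the evaluation map $H^0(\mathcal{E}) \to \mathcal{E}_x$ is injective at every $x$, making $V^{\max}$ a genuine trivial subbundle. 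It is nonzero because $\mathcal{E}$ admits a line subbundle (saturate a section of a sufficiently positive twist $\mathcal{E}(k)$), which when viewed as a line subbundle of $\OO^N$ must be $\OO$ by the base case. If $V^{\max} = \mathcal{E}$ we are done, since a trivial subbundle of $\OO^N$ is specified by a subspace of $K^N$; otherwise $\mathcal{E}/V^{\max}$ is a subbundle of $\OO^{N-\dim V^{\max}}$ of rank $<s$, and the induction hypothesis forces it to be trivial. Since $\Ext^1_{\OO_{\mathbb{P}^n}}(\OO^p,\OO^q)$ is a direct sum of copies of $H^1(\OO_{\mathbb{P}^n}) = 0$, the extension $0 \to V^{\max} \to \mathcal{E} \to \mathcal{E}/V^{\max} \to 0$ splits, producing a trivial subbundle of $\mathcal{E}$ strictly larger than $V^{\max}$ and contradicting maximality. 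Hence $V^{\max} = \mathcal{E}$, completing the induction.

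For the Kronecker assertion, take $l=1$ in $\mathcal{K}_{n+1}$ with the weight $\sigma_0$ from Example~\ref{ex:basicexamples}(1), so $L=2$, $\M = \mathbb{P}^n$, and $R^1(M) = M_1$. Applying the main statement with $V = \V$ produces a fixed subspace $U \subseteq M_1$ such that $\ima\alpha_M = U$ for every $\alpha\in\V$; specializing $\alpha$ to the standard basis vectors $e_i \in \V$ identifies $U$ with $\sum_{i=1}^{n+1} \ima M(a_i)$, and the hypothesis that $S(1)$ is not a direct summand of $M$ means precisely that this sum already equals $M_1$, whence $\ima\alpha_M = M_1$ for all $\alpha\in\V$ and $M \in \eip(\mathcal{K}_{n+1})$.

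The main obstacle is the Key Lemma. Once it is in place, the rest of the argument is essentially formal, but the lemma depends on two genuinely non-trivial inputs: the hypersurface-intersection bound on $\mathbb{P}^n$ driving the base case, and the fact that $V^{\max}$ is a honest subbundle of $\mathcal{E}$ --- a property that crucially uses the ambient bundle $\OO^N$ being trivial to force injective evaluation of global sections at every point. The inductive step then glues these pieces via the vanishing of $H^1(\OO_{\mathbb{P}^n})$, which ensures that extensions of trivial bundles on $\mathbb{P}^n$ split.
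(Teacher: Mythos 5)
Your overall strategy is genuinely different from the paper's. The paper observes that $\alpha\mapsto\ima\alpha_M^l$ is $T$-invariant, descends it to a morphism $\PP^n\to\Gr_{r_l}(R^l(M))$, and immediately quotes Tango \cite{Tan} to conclude this morphism is constant. You instead translate the descended morphism into the corresponding rank-$r_l$ subbundle $\mathcal{E}\subseteq R^l(M)\otimes_K\OO_{\PP^n}$ (via the tautological bundle) and attempt a self-contained proof of the needed case of Tango's theorem by induction on rank. The translation is fine, as are several of the ingredients: the observation that $V^{\max}=H^0(\mathcal{E})\otimes_K\OO$ is an honest trivial subbundle of $\mathcal{E}$ (because a section of $\mathcal{E}\subseteq\OO^N$ is a constant vector, so evaluation is pointwise injective), the $\Ext^1(\OO^p,\OO^q)=H^1(\OO_{\PP^n})^{pq}=0$ splitting, and the Kronecker deduction at the end.

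The gap is in your claim that $V^{\max}\neq 0$. You take a section of $\mathcal{E}(k)$ for $k\gg 0$, saturate its image, and assert this yields a line \emph{subbundle} of $\mathcal{E}$ (hence of $\OO^N$) to which your base case applies. But on $\PP^n$ with $n\geq 2$, the saturation of a rank-one subsheaf of a vector bundle only has torsion-free quotient, not locally free quotient; it is a saturated subsheaf, generally not a subbundle. In fact, for $\rk\mathcal{E}\geq 2$ and $k\gg 0$, a generic section of $\mathcal{E}(k)$ vanishes in codimension $\geq 2$, so the inclusion $\OO(-k)\hookrightarrow\mathcal{E}$ is already saturated and its "saturation" is $\OO(-k)$, nowhere near $\OO$. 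Your base case, which rests on the fact that $N\leq n$ hypersurfaces of positive degree in $\PP^n$ must meet, only forces a genuine line \emph{subbundle} of $\OO^N$ to be trivial; it says nothing about saturated line subsheaves, which can have negative degree — e.g.\ $\OO(-1)\xrightarrow{(x_0,x_1,x_2)}\OO^3$ on $\PP^3$ is a saturated, non-subbundle inclusion with $N=3\leq n=3$. Establishing $H^0(\mathcal{E})\neq 0$ for a subbundle $\mathcal{E}\subseteq\OO^N_{\PP^n}$ with $N\leq n$ is precisely the nontrivial content of the Tango-type result, so as written your induction is circular at this step. To make it self-contained you would need an actual argument for the $H^0$ nonvanishing (a Chern-class or restriction-to-lines argument, say); otherwise one should cite \cite{Tan} as the paper does.
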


\begin{proof} Let $r_l$ be the common rank of the linear operators $\alpha_M^l$, $\alpha \in V$. The map $V \to \Gr_{r_l}(R^l(M))$, sending $\alpha \in V$ to $\ima \alpha_M^l$, is a $T$-invariant morphism. (Recall that for any $\beta \in \A^{Q_1}$, $l \geq 1$, and $N \in \module(KQ)$, we have that $\ima \beta_M^l \subseteq R^l(N)$.) Furthermore, $Y$ is a good quotient of $V$ by $T$ with quotient morphism the restriction of $\pi$ to $V$. Hence, it follows from the universal property of quotient varieties that there exists a unique morphism 
$$
\ima_M^l: Y \to \Gr_{r_l}(R^l(M))
$$
such that any $\overline{\alpha}=\pi(\alpha) \in Y$, with $\alpha \in V$, is mapped to $\ima \alpha_M^l$. Consequently, we have that:
$$
\ima_M^l \text{~is constant~} \Longleftrightarrow \ima \alpha_M^l \text{~does not change for~} \alpha \in V.
$$
The first claim of the proposition now follows from Tango's results on morphisms from projective spaces to Grassmannians (see \cite{Tan}).

When $Q=\mathcal K_{n+1}$, we have that $\V=\A^{n+1}\setminus \{0\}$, $\M=\mathbb P^n$, and $\cjt(\mathcal K_{n+1})=\CR^1(\mathcal K_{n+1})$. Consequently, for $M \in \cjt(\mathcal K_{n+1})$ with $\dim_K M_1 \leq n$, we have that $R:=\ima \alpha_M$ does not change for $\alpha \in \V$. Hence, $R=\rad(M)=M_1$, i.e. $M \in \eip(\mathcal K_{n+1})$.
\end{proof}

\section{The exact category $\cjt(Q)$}\label{exact-cjt:sec} In modular representation theory of finite group schemes, Carlson and Friedlander in \cite{CarFri} equipped the additive category of modules of constant Jordan type with an exact structure in the sense of Quillen. In this section, we adapt Carlson-Friedlander's approach to our quiver set-up. 

Let $V$ be a non-empty open subset of $\A^{Q_1}$. Given a module $M \in \module(KQ)$ and an integer $i \geq 1$, consider the map $f_i:\A^{Q_1} \to \ZZ_{\geq 0}$, $f_i(\alpha)=\rk \alpha_M^i$, which is easily seen to be lower semi-continuous. Denote by $\Max_i(M)=\Max \{f_i(\alpha) \mid \alpha \in \A^{Q_1}\}$ and define
$$
V_{\Max}(M)=\{\alpha \in V \mid \rk \alpha_M^i= \Max_i(M), \forall i \geq 1\},
$$
which is a non-empty open subset of $V$. We have the following simple facts:
\begin{itemize}
\item $M \in \cjt_V(Q)$ if and only if $V_{\Max}(M)=V$;
\item $V_{\Max}(M_1 \oplus M_2)=V_{\Max}(M_1)\cap V_{\Max}(M_2)$.
\end{itemize}
As an immediate consequence, we have:

\begin{lemma} The category $\cjt_V(Q)$ is closed under isomorphisms, direct sums, and direct summands. 
\end{lemma}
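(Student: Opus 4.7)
The plan is to reduce all three closure properties to the two bullet-point observations immediately preceding the lemma, namely that $M \in \cjt_V(Q)$ iff $V_{\Max}(M) = V$, and that $V_{\Max}(M_1 \oplus M_2) = V_{\Max}(M_1) \cap V_{\Max}(M_2)$. Once these are in hand, essentially no further computation is needed.

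First, for closure under isomorphisms, I would observe that if $\varphi : M \to M'$ is an isomorphism of $KQ$-modules, then for every $\alpha \in \A^{Q_1}$ the operator $\alpha_{M'}$ is conjugate to $\alpha_M$ via $\varphi$, since $\varphi$ commutes with the action of each $X_a \in KQ$ and $T_\alpha$ is a $K$-linear combination of such $X_a$. Conjugate nilpotent operators have the same rank at every power, hence the same Jordan canonical form, so the map $f_i$ associated to $M'$ agrees with that of $M$ on all of $\A^{Q_1}$. In particular $\Max_i(M') = \Max_i(M)$ and $V_{\Max}(M') = V_{\Max}(M)$, which gives the claim via the first bullet.

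Next, for closure under direct sums, suppose $M_1, M_2 \in \cjt_V(Q)$. By the first bullet we have $V_{\Max}(M_i) = V$ for $i=1,2$, and the second bullet then gives
\[
V_{\Max}(M_1 \oplus M_2) \;=\; V_{\Max}(M_1) \cap V_{\Max}(M_2) \;=\; V,
\]
so $M_1 \oplus M_2 \in \cjt_V(Q)$. For closure under direct summands, suppose $M = M_1 \oplus M_2 \in \cjt_V(Q)$, so $V_{\Max}(M) = V$. Again by the second bullet, $V_{\Max}(M_1) \cap V_{\Max}(M_2) = V$; since each $V_{\Max}(M_i)$ is a subset of $V$ by definition, we are forced to conclude $V_{\Max}(M_1) = V_{\Max}(M_2) = V$, and applying the first bullet once more shows $M_1, M_2 \in \cjt_V(Q)$.

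There is essentially no obstacle here, provided the two bulleted facts are accepted. The only small sanity check I would perform is the second bullet itself, which follows from the fact that $(\alpha_{M_1 \oplus M_2})^i = \alpha_{M_1}^i \oplus \alpha_{M_2}^i$, so $f_i^{M_1 \oplus M_2} = f_i^{M_1} + f_i^{M_2}$ and $\Max_i(M_1 \oplus M_2) = \Max_i(M_1) + \Max_i(M_2)$, whence $\rk \alpha_{M_1 \oplus M_2}^i = \Max_i(M_1 \oplus M_2)$ iff both summands attain their individual maxima at $\alpha$.
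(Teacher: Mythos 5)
Your proposal is correct and follows essentially the same reasoning the paper has in mind: the paper dispatches the lemma in one line as an ``immediate consequence'' of the two bulleted observations, and your write-up simply fills in the details, including the conjugacy argument for closure under isomorphism. The only implicit ingredient worth flagging in your sanity check of the second bullet is that $\Max_i(M_1\oplus M_2)=\Max_i(M_1)+\Max_i(M_2)$ relies on $\A^{Q_1}$ being irreducible and the $f_i$ being lower semicontinuous, so that the loci where each summand attains its maximal rank are dense open and therefore meet.
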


Recall that if $M \in \module(KQ)$ and $\alpha \in \A^{Q_1}$, $\alpha^*(M)$ denotes the pull-back of $M$ along the algebra homomorphism $K[t]/(t^L)\to KQ$ defined by sending $t+(t^L)$ to $T_{\alpha}$. Note that $M \in \cjt_V(Q)$ if and only if the decomposition of $\alpha^*(M)$ into indecomposable $K[t]/(t^l)$-modules does not depend on the choice of $\alpha$ in $V$.

Following Carlson and Friedlander in \cite{CarFri}, we say that a short exact sequence
$$
0 \to M_1 \to M_2 \to M_3\to 0
$$
in $\module(KQ)$ is \emph{locally split} if, for every $\alpha \in V$, the exact sequence $$0 \to \alpha^*(M_1) \to \alpha^*(M_2) \to \alpha^*(M_3) \to 0$$ splits in $\module(K[t]/(t^L))$. A locally split sequence is also refereed to as an \emph{admissible sequence}. Furthermore, a homomorphism appearing as the first map in an admissible sequence is called an \emph{admissible monomorphism}. Similarly, a homomorphism appearing as the second map of an admissible sequence is called an \emph{admissible epimorphism}.

If $0 \to M_1 \to M_2 \to M_3\to 0$ is a locally split exact sequence then it is immediate to see that $\rk(\alpha_{M_2}^i)=\rk(\alpha_{M_1}^i)+\rk(\alpha_{M_3}^i)$ for all $\alpha \in V$ and $i \geq 1$, and hence $V_{\Max}(M_2)=V_{\Max}(M_1)\cap V_{\Max}(M_3)$. Consequently, we have:

\begin{prop}\label{prop-split-ext} If $0 \to M_1 \to M_2 \to M_3\to 0$ is a locally split extension then $M_1, M_3 \in \cjt_V(Q)$ if and only if $M_2 \in \cjt_V(Q)$, i.e. $\cjt_V(Q)$ is closed under locally split extensions.
\end{prop}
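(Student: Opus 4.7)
The plan is short because most of the work has already been done in the discussion immediately preceding the statement: the author has recorded the rank-additivity identity $\rk(\alpha_{M_2}^i) = \rk(\alpha_{M_1}^i) + \rk(\alpha_{M_3}^i)$ for $\alpha\in V$, $i\geq 1$, and the set-theoretic identity $V_{\Max}(M_2) = V_{\Max}(M_1) \cap V_{\Max}(M_3)$. Combined with the earlier characterization $M \in \cjt_V(Q) \Leftrightarrow V_{\Max}(M) = V$, the proposition is essentially a formal consequence. So my plan is to cleanly reassemble these ingredients.

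First, I would reprove rank additivity. By definition of a locally split extension, for every $\alpha\in V$ the pulled-back sequence $0 \to \alpha^\ast(M_1) \to \alpha^\ast(M_2) \to \alpha^\ast(M_3) \to 0$ splits in $\module(K[t]/(t^L))$, so $\alpha^\ast(M_2) \simeq \alpha^\ast(M_1) \oplus \alpha^\ast(M_3)$. Since the operator $\alpha_M$ corresponds under this isomorphism to multiplication by $t$, it is block-diagonal, whence the ranks of its $i$-th powers add.

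Next I would establish $\Max_i(M_2) = \Max_i(M_1) + \Max_i(M_3)$ for each $1 \leq i \leq L$. Each $f_i(M_j)$ is lower semi-continuous with values in $\ZZ_{\geq 0}$, so the locus $\{\alpha : f_i(M_j)(\alpha) = \Max_i(M_j)\}$ is a non-empty open subset of the irreducible variety $\A^{Q_1}$. Intersecting these finitely many dense opens over $i \leq L$ and $j \in \{1,3\}$ gives a dense open set, which must meet the non-empty open $V$. At any common point $\alpha_0\in V$ in this intersection, rank additivity yields $\rk(\alpha_{0,M_2}^i) = \Max_i(M_1) + \Max_i(M_3)$; the reverse inequality $\rk(\alpha_{M_2}^i) \leq \Max_i(M_1) + \Max_i(M_3)$ for all $\alpha\in V$ is immediate, so the claim follows. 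Moreover, for $\alpha \in V$, the bounded sum $\rk(\alpha_{M_1}^i) + \rk(\alpha_{M_3}^i)$ attains its maximum value $\Max_i(M_1) + \Max_i(M_3)$ if and only if both summands do, which gives the identity $V_{\Max}(M_2) = V_{\Max}(M_1) \cap V_{\Max}(M_3)$.

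Finally, since $V_{\Max}(M_j) \subseteq V$ for each $j$, the chain of equivalences
\[ M_1, M_3 \in \cjt_V(Q) \iff V_{\Max}(M_1) = V_{\Max}(M_3) = V \iff V_{\Max}(M_1)\cap V_{\Max}(M_3) = V \iff V_{\Max}(M_2) = V \iff M_2 \in \cjt_V(Q) \]
closes the proof. There is no significant obstacle; the only subtle point is ensuring that the maxima taken over all of $\A^{Q_1}$ are attained at points of $V$, which is guaranteed by lower semi-continuity together with the density of $V$ in the irreducible affine space $\A^{Q_1}$.
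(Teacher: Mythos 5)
Your proposal is correct and follows essentially the same route as the paper, which records the rank-additivity identity $\rk(\alpha_{M_2}^i)=\rk(\alpha_{M_1}^i)+\rk(\alpha_{M_3}^i)$ and the resulting identity $V_{\Max}(M_2)=V_{\Max}(M_1)\cap V_{\Max}(M_3)$ in the sentence immediately preceding the statement and treats the proposition as an immediate consequence. You have merely spelled out the details the paper labels ``immediate'' --- in particular the semi-continuity/density argument ensuring that the maxima over $\A^{Q_1}$ are simultaneously attained at a point of $V$, which is what makes $\Max_i(M_2)=\Max_i(M_1)+\Max_i(M_3)$ and hence the set-theoretic identity for $V_{\Max}$ legitimate. (A small redundancy: once you have a point $\alpha_0\in V$ where all three $f_i$'s attain their maxima and rank additivity holds, the equality $\Max_i(M_2)=\Max_i(M_1)+\Max_i(M_3)$ falls out in one line; you do not need to argue the two inequalities separately.)
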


With this proposition in mind and using the strategy from \cite{CarFri}, one can easily check that $\cjt_V(Q)$ has an exact category structure (see also \cite[Proposition 1.5]{CarFri}):

\begin{prop} The category $\cjt_V(Q)$ together with the class of admissible sequences is an exact category in the sense of Quillen.
\end{prop}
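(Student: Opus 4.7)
My plan is to verify Keller's simplified form of Quillen's axioms for the pair consisting of $\cjt_V(Q)$ as the underlying additive category (closure under finite direct sums and summands was recorded in the lemma preceding Proposition \ref{prop-split-ext}) and the class of locally split sequences as the admissible sequences. The driving observation is that for every $\alpha \in V$, the functor $\alpha^* : \module(KQ) \to \module(K[t]/(t^L))$ is exact, so it sends pushouts to pushouts and pullbacks to pullbacks, and locally splitness is then checked one $\alpha$ at a time inside the (split-)exact structure on $\module(K[t]/(t^L))$ consisting of split short exact sequences.

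The first easy verifications are that $\mathrm{id}_M$ is both an admissible mono and an admissible epi (via the obvious split sequences), and that admissible monos (respectively, admissible epis) are closed under composition. For two composable admissible monos $M_1 \xrightarrow{f_1} M_2 \xrightarrow{f_2} M_3$, one applies $\alpha^*$ to the filtration $M_1 \subseteq M_2 \subseteq M_3$ to obtain $\alpha^*M_3 \cong \alpha^*M_1 \oplus \alpha^*(M_2/M_1) \oplus \alpha^*(M_3/M_2)$, which exhibits the sequence $0 \to \alpha^*M_1 \to \alpha^*M_3 \to \alpha^*(M_3/M_1)\to 0$ as split; that $M_3/M_1$ lies in $\cjt_V(Q)$ then follows from Proposition \ref{prop-split-ext} applied to $0 \to M_2/M_1 \to M_3/M_1 \to M_3/M_2 \to 0$. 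The argument for admissible epis is dual.

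Next I would address the pushout/pullback axioms. Given an admissible mono $f:M_1 \hookrightarrow M_2$ in $\cjt_V(Q)$ and an arbitrary morphism $g:M_1 \to M_1'$ in $\cjt_V(Q)$, form the pushout $P$ in $\module(KQ)$, yielding a short exact sequence $0 \to M_1' \to P \to M_2/M_1 \to 0$. Applying the exact functor $\alpha^*$ produces the pushout of $\alpha^*f$ along $\alpha^*g$; since $\alpha^*f$ is a split monomorphism (with retraction $r$, say), the universal property of the pushout produces a retraction of its image, so $\alpha^*P \cong \alpha^*M_1' \oplus \alpha^*(M_2/M_1)$ for every $\alpha \in V$. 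In particular the new sequence is locally split, and because $M_1', M_2/M_1 \in \cjt_V(Q)$, Proposition \ref{prop-split-ext} places $P$ in $\cjt_V(Q)$ as well. The pullback axiom for admissible epis is handled by the dual computation.

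The main (and essentially only) subtlety will be verifying that pushouts/pullbacks of admissible sequences are themselves admissible and stay inside $\cjt_V(Q)$; everything else is formal. This subtlety is resolved by the two ingredients above, namely exactness of $\alpha^*$ and the elementary fact that split monos are preserved by pushouts (and split epis by pullbacks) in any abelian category, combined with Proposition \ref{prop-split-ext}. Once these checks are in hand, Keller's axioms are all satisfied, and $\cjt_V(Q)$ together with the locally split sequences is an exact category in the sense of Quillen, completely paralleling \cite[Proposition 1.5]{CarFri}.
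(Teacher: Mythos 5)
Your proof is correct and follows essentially the same route as the paper: both verify Keller's axioms by exploiting the exactness of $\alpha^*$ (so that pushouts/pullbacks of locally split sequences remain locally split) and by invoking Proposition \ref{prop-split-ext} to keep pushouts and pullbacks inside $\cjt_V(Q)$. The only cosmetic difference is that you treat the mono-side axioms (pushouts, composition of admissible monos via the filtration $M_1\subseteq M_2\subseteq M_3$) while the paper treats the dual epi-side axioms via a pullback diagram; the underlying decomposition argument is the same.
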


\begin{proof} According to Keller \cite[Appendix A]{Kel} (see also \cite[Section 4.3]{Ben2}), to verify that the admissible sequences define an exact structure on $\cjt_V(Q)$, we need to check three properties. The first property consists of the conditions that any sequence isomorphic to an admissible one is admissible; if $0 \to M_1 \to M_2 \to M_3 \to 0$ is admissible then $M_1 \to M_2$ is the kernel of $M_2 \to M_3$ and $M_2 \to M_3$ is the cokernel of $M_1 \to M_2$. It is obvious that these conditions hold.

The second property consists of the conditions that the identity map $0\to0$ is an admissible epimorphism; the push-out/pull-back of any morphism and an admissible monomorphism/epimorphism exists and is an admissible monomorphism/epimorphism. We will only check the condition on the pull-back: Let $0 \to Y \to M_1 \to M_2\to 0$ be an admissible sequence with $Y,M_1$ and $M_2$ in $\cjt_V(Q)$ and let $g:X\to M_2$ be a homomorphism with $X \in \cjt_V(Q)$. We then have the commutative diagram in $\module(KQ)$:
$$
\vcenter{\hbox{  
\begin{tikzpicture}[node distance=1.5cm, auto]
\node (M_1) {$M_1$};
\node (M'_1) [above of=M_1] {$M'_1$};
\node (Y) [left of=M_1] {$Y$};
\node (M_2) [right of=M_1] {$M_2$};
\node (Y') [above of=Y] {$Y$};
\node (X) [right of=M'_1] {$X$};
\node (O1) [left of=Y'] {$0$};
\node (O2) [right of=X] {$0$};
\node (O3) [below of=O1] {$0$};
\node (O4) [below of=O2] {$0$};

\draw[->] (M_1) to node {f} (M_2);
\draw[->] (Y) to node {} (M_1);
\draw[->] (M'_1) to node {u} (M_1);
\draw[->] (Y') to node {Id} (Y);
\draw[->] (X) to node {g} (M_2);
\draw[->] (Y') to node {} (M'_1);
\draw[->] (M'_1) to node {v} (X);
\draw[->] (O1) to node {} (Y');
\draw[->] (X) to node {} (O2);
\draw[->] (O3) to node {} (Y);
\draw[->] (M_2) to node {} (O4);
\end{tikzpicture}
}}
$$
where $(M'_1,u,v)$ is the pull-back of $(M_2,f,g)$. Since for each $\alpha \in V$, $\alpha^*$ applied to the bottom row splits in $K[t]/(t^L)$, the same is true for $\alpha^*$ applied to the top row. In other words, $0 \to Y \to M'_1\to X\to0$ is an admissible sequence. Using Proposition \ref{prop-split-ext}, we conclude that $M'_1 \in \cjt_V(Q)$ and so the pull-back  of the morphisms in question exists in $\cjt_V(Q)$ and is an admissible epimorphism. The condition on the push-out is proved similarly.

Finally, the third property asserts that the composition of two admissible epimorphisms is an admissible epimorphism. Let $f:M_1\to M_2$ and $h:M_2\to M_3$ be two admissible epimorphisms. Consider the induced commutative diagram:

$$
\vcenter{\hbox{  
\begin{tikzpicture}[node distance=1.5cm, auto]
\node (M_1) {$M_1$};
\node (M'_1) [above of=M_1] {$M'_1$};
\node (Y) [left of=M_1] {$Y$};
\node (M_2) [right of=M_1] {$M_2$};
\node (Y') [above of=Y] {$Y$};
\node (X) [right of=M'_1] {$X$};
\node (M_3) [below of=M_2] {$M_3$};
\node (O1) [left of=Y'] {$0$};
\node (O2) [right of=X] {$0$};
\node (O3) [below of=O1] {$0$};
\node (O4) [below of=O2] {$0$};
\node (O5) [below of=M_3] {$0$};
\node (O6) [above of=X] {$0$};

\draw[->] (M_1) to node {f} (M_2);
\draw[->] (Y) to node {} (M_1);
\draw[->] (M'_1) to node {u} (M_1);
\draw[->] (Y') to node {Id} (Y);
\draw[->] (X) to node {g} (M_2);
\draw[->] (M_2) to node {h} (M_3);
\draw[->] (Y') to node {} (M'_1);
\draw[->] (M'_1) to node {v} (X);
\draw[->] (O1) to node {} (Y');
\draw[->] (X) to node {} (O2);
\draw[->] (O3) to node {} (Y);
\draw[->] (M_2) to node {} (O4);
\draw[->] (M_3) to node {} (O5);
\draw[->] (O6) to node {} (X);
\end{tikzpicture}
}}
$$
where the bottom row and the rightmost column are admissible sequences and $(M'_1, u, v)$ is the pull-back of $(M_2,f,g)$. It is easy to check that $0\to M'_1\to M_1\to M_3 \to 0$ is a short exact sequence in $\module(KQ)$. Moreover, for any $\alpha \in V$, we have that $\alpha^*(M_1)\simeq \alpha^*(Y)\oplus \alpha^*(M_2)\simeq \alpha^*(Y)\oplus \alpha^*(X)\oplus \alpha^*(M_3)\simeq \alpha^*(M'_1)\oplus\alpha^*(M_3)$ in $K[t]/(t^L)$. This is equivalent to saying that $0\to M'_1\to M_1\to M_3$ is admissible and hence $h \circ f:M_1 \to M_3$ is, indeed, an admissible epimorphism.
\end{proof}

Now, we can define the Grothendieck group $K_0(\cjt_V(Q))$ of $\cjt_V(Q)$ to be the quotient of the free abelian group whose generators are the symbols $[M]$ corresponding to the isomorphism classes of modules $M \in \cjt_V(Q)$ modulo the subgroup generated by elements of the form $[M_1]-[M_2]+[M_3]$ for all admissible sequences $0\to M_1 \to M_2 \to M_3 \to 0$. The elements of $\cjt_V(Q)$ are called \emph{virtual} representations of $Q$ of relative constant Jordan type.

We have a group homomorphism:
$$
\jt_V: K_0(\cjt_V(Q)) \to \ZZ^L
$$
defined by sending $[M]$, where $M$ has constant Jordan type relative to $V$, to $\jt_V(M)$. The result below is the quiver analog of Proposition 3.1 in \cite{CarFri}:

\begin{prop}\label{prop:jordantypesurjectivity}
The map $\jt_{V_{inj}}$ is surjective.
\end{prop}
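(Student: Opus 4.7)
The plan is to exhibit an explicit $\ZZ$-basis of $\ZZ^L$ inside the image of $\jt_{V_{inj}}$, built entirely from injective $KQ$-modules — which, by the preceding proposition, all lie in $\eip_{V_{inj}}(Q) \subseteq \cjt_{V_{inj}}(Q)$.

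First I would fix a path $v_0 \to v_1 \to \cdots \to v_{L-1}$ in $Q$ of maximal length $L-1$, which exists because the Loewy length of $KQ$ equals $L$. For $x \in Q_0$, let $d(x)$ denote the length of the longest path in $Q$ ending at $x$. A short maximality argument yields $d(v_i) = i$ for every $i$: the initial segment $v_0 \to \cdots \to v_i$ shows $d(v_i) \geq i$, and $d(v_i) > i$ would allow one to prepend a longer path ending at $v_i$ to the tail $v_i \to \cdots \to v_{L-1}$, producing a path of length exceeding $L-1$ and contradicting the maximality of our chosen path.

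Next, fix $l \in \{1, \ldots, L\}$ and consider the injective module $I(v_{l-1})$. Since $I(v_{l-1}) \in \eip_{V_{inj}}(Q)$, one has $\rk \alpha_{I(v_{l-1})}^k = \dim R^k(I(v_{l-1})) = \sum_{y \in \Gamma^k_{in}} \dim I(v_{l-1})_y$ for every $\alpha \in V_{inj}$ and every $k \geq 0$. The key combinatorial observation is that any vertex $y \in \Gamma^k_{in}$ with $\dim I(v_{l-1})_y \neq 0$ admits both a path of length $k$ ending at $y$ and a path of some length $m \geq 0$ from $y$ to $v_{l-1}$; concatenation yields a path of length $k+m$ ending at $v_{l-1}$, forcing $k+m \leq d(v_{l-1}) = l-1$. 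Consequently $R^k(I(v_{l-1})) = 0$ for all $k \geq l$, while $R^{l-1}(I(v_{l-1}))$ receives contributions only from $y = v_{l-1}$ (forced by $m=0$), giving $\dim R^{l-1}(I(v_{l-1})) = 1$. Substituting into the formula $a_j = \dim R^{j-1} - 2\dim R^j + \dim R^{j+1}$ then yields $a_l(I(v_{l-1})) = 1$ and $a_j(I(v_{l-1})) = 0$ for every $j > l$.

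Therefore the $L \times L$ integer matrix whose $l$-th column is $\jt_{V_{inj}}(I(v_{l-1})) = (a_1, \ldots, a_L)$ is upper triangular with ones along the diagonal, hence has determinant $1$. Its columns form a $\ZZ$-basis of $\ZZ^L$, which establishes surjectivity. The essentially only substantive step is the combinatorial identity $d(v_i) = i$ for vertices along a longest path, together with its consequence that the contribution of $v_{l-1}$ to $R^{l-1}(I(v_{l-1}))$ is a single one-dimensional summand; everything else reduces to elementary linear algebra over $\ZZ$.
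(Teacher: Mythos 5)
Your proof is correct and follows essentially the same strategy as the paper: both choose injective modules $I(x)$ at vertices whose longest incoming path has length $l$, show the resulting Jordan type has a single block of the maximal size $l+1$ and nothing larger, and conclude by a triangularity argument over $\ZZ$. The only substantive difference is that you compute $\rk\,\alpha^k_{I(x)}$ via the equal-images property (using that injectives lie in $\eip_{V_{inj}}(Q)$ and that $\rk\,\alpha^k_M=\dim R^k(M)$), whereas the paper computes the rank directly from the matrix of $\alpha_{I(x)}^l$ in the path basis and invokes Lemma~\ref{lem:ppaths}; your explicit choice of the $v_i$ along a fixed longest path also makes transparent why a vertex of each depth $0,\dots,L-1$ exists, a point the paper leaves implicit.
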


\begin{proof}
  \label{sec:map-jt_v_-surj}
Denote by $I(x)$ the injective envelope of the simple $K Q$-module supported at the vertex $x$.  Recall that $I(x)_y$ has basis consisting of paths $p$ with $tp=y$ and $hp=x$.  Call $q$ an initial subpath of $p$ if there is a path $r$ with $rq=p$.  Denote such a path $r$ by $p\setminus q$.  Then the action of a path $q\in KQ$ on an element $p\in I(x)$ is given by 
$$q\cdot p =
\begin{cases}
    p\setminus q & \textrm{~if~} q \textrm{~is an initial subpath of~}
 p\\
 0 & \textrm{~otherwise~}
\end{cases}.$$  
For any $\alpha\in \mathbb{A}^{Q_1}$, we view $\alpha_{I(x)}^l$ as a matrix in the basis exhibited above. Note that its $(p,q)$-entry is: 
$$
(\alpha_{I(x)}^l)_{p,q}=
\begin{cases}
  \varphi_\alpha(r) & \textrm{~if there is a path~} r \textrm{~of
    length~} l \textrm{~with~}
  rp=q\\
  0 & \textrm{~otherwise~}
\end{cases}.$$  (In the first case we say that $p$ can be preceded by the path $r$.)  Thus, the only rows in which there could be non-zero entries are those rows corresponding to paths terminating at the vertex $x$ which can be preceded by a path of length $l$. Furthermore, each column contains at most one non-zero entry.

Now suppose that $\alpha \in F_{inj} \cap V$, which is non-empty since both $F_{inj}$ and $V$ are non-empty open sets in an irreducible variety.  For $l=0,\dotsc, L-1$, denote by $Q_0(l)$ the set of vertices $x$ for which the longest path in $Q$ terminating at $x$ is of length $l$.  For any $x\in Q_0(l)$, we claim that $\alpha_{I(x)}^l$ has rank precisely one.  To prove this claim, first notice that the only row of the matrix of $\alpha_{I(x)}^l$ in which there could be a non-zero entry is a row corresponding to a path ending at $x$ that can be preceded by a path of length $l$.  The only such path is the trivial path $e_x$, so the rank of $\alpha_{I(x)}^l$ is at most one. Now by Lemma {~\ref{lem:ppaths}} there is a path $r$ of length $l$ terminating at $x$ such that $\varphi_\alpha(r)\neq 0$, thus $(\alpha_{I(x)}^l)_{e_x,r}=\varphi_\alpha(r)\neq 0$, so the rank is at least one.  

Finally, $\alpha_{I(x)}^{l+j}=0$ for $j>0$ since there are no paths in $Q$ of length more than $l$ terminating at $x$.  In particular, there is precisely one $(l+1)\times (l+1)$ Jordan block in the Jordan form of $\alpha_{I(x)}$, so $\jt_{V_{inj}}(I(x)) = E_{l+1} +\sum_{j\leq l} \gamma_j E_j$ where $E_j$ denotes the $j$-th standard basis vector in $\mathbb{Z}^L$.  Taking a collection of vertices $x_l \in Q_0(l)$ for $l=0,\dotsc, L-1$, the collection $\{\jt_{V_{inj}}(I(x_l)) \mid l=0, \dotsc, L-1\}$ is a $\mathbb{Z}$-basis of $\mathbb{Z}^L$.  In particular, $\jt_{V_{inj}}$ is surjective.
\end{proof}

\begin{remark} Proposition \ref{prop:jordantypesurjectivity} simply says that any vector of $\ZZ^L$ can be realized as the Jordan type of a virtual $KQ$-module of constant Jordan type relative to $V_{inj}$. This gives a partial answer to the very difficult \emph{algebraic realization problem} for modules of constant Jordan type which asks to describe those $L$-tuples $(a_L, \ldots, a_1) \in \ZZ^L_{\geq 0}$ that can be realized as Jordan types of modules of constant Jordan type. We plan to address this problem in a forthcoming paper on the subject. Finally, we mention that the realization problem is related, via Theorem \ref{cjt-bundles-thm}, to the notoriously difficult problem of finding indecomposable vector bundles of small rank over projective spaces. It is our hope that the quiver representation theoretic approach described above will shed light on the construction of such vector bundles. 
\end{remark}
%\bibliography{biblio}

\end{document}